\documentclass{article}
%\usepackage{draftwatermark}
%\SetWatermarkLightness{0.7}
%\SetWatermarkText{Hemlig!}
%\SetWatermarkScale{ 3 }
\usepackage[T1]{fontenc}
\usepackage[latin1]{inputenc}

\usepackage{amsmath,amsthm,amssymb,mathtools,array,centernot,paralist}

\usepackage{tikz}

\newtheorem{theorem}{Theorem}[section]
\newtheorem{proposition}{Proposition}[section]

\newtheorem{lemma}[theorem]{Lemma}
\newtheorem{corollary}[theorem]{Corollary}
\theoremstyle{definition}

\newtheorem{definition}{Definition}[section]
\newtheorem*{remark}{Remark}

\setlength{\plitemsep}{0.75ex}
\setlength{\pltopsep}{0.25ex}

\newcommand{\Z}{\ensuremath{\mathbb{Z}}}
\renewcommand{\P}{\mathbf{P}}

\DeclareMathAlphabet{\matris}{T1}{cmss}{m}{sl}

\newcommand{\E}{\mathcal{E}}
\newcommand{\tildeE}{\widetilde{\mathcal{E}}}
\newcommand{\negspace}{\mkern-18mu}

\title{Counting linear congruence systems with a fixed number of solutions}
\author{%
  Marcus Nilsson\\
  \texttt{marcus.nilsson@lnu.se}\\
  Linnaeus University\\
  Sweden
}

\begin{document}

\maketitle

\begin{abstract}
	For a prime $p$ and a positive integer $s$ consider a homogeneous linear system over the ring $\Z_{p^s}$ (the ring of integers modulo $p^s$) described by an $n\times m$-matrix. The possible number of solutions to such a system is $p^j$, 
	where $j=0,1,\ldots, sm$. We study the problem
	of how many $n\times m$-matrices over $\Z_{p^s}$ there are given that we have exactly $p^j$ homogeneous solutions. 
In a more algebraic language we count the number of homomorphisms $A:\Z_{p^s}^m\to \Z_{p^{s}}^n$ such that $|\ker(A)|=p^j$.
	
For the case $s=1$ (when $\Z_{p^s}$ is a field) George von Landsberg proved a general formula in 1893,\cite{Landsberg1893}. However, there seems to be few published general results for the case $s>1$ except when we have a unique solution ($j=0$). In this article we present recursive methods for counting such matrices and present explicit formulas
for the case when $j\leq s$ and $n\geq m$. We will use a generalization of Euler's $\varphi$-function and Gaussian binomial coefficients to express our formulas. 	
	
As an application we compute the probability that
$\gcd(\det(A),p^s)$ gives the number of solutions to the quadratic system $Ax=0$ in $\Z_{p^s}^n$.
\end{abstract}

\section{Introduction}
For a prime $p$ and a positive integer $s$ consider a homogeneous linear system over the ring $\Z_{p^s}$ (the ring of integers modulo $p^s$) $Ax=0$, where $A$ is an $n\times m$-matrix. The possible number of solutions to such a system is $p^j$, where $j=0,1,\ldots, sm$. To solve such a system there are several alternatives. On can for example use Smiths normal form, first published in \cite{Smith1861}, and rewrite the system to a diagonal system. One can also use ordinary Gaussian elimination with possible use of reductions to rings $\Z_{p^t}$ for $t<s$ to find invertible elements, see for example \cite{NilssonNyqvist2009}.  If we know the solutions modulo $p^s$ we can by using the Chinese remainder theorem easily solve the system modulo any integer $N\geq 2$ if we know its prime factorization. 

In this article we study the following inverse problem:\\ 
How many $n\times m$-matrices over $\Z_{p^s}$ are there given that we have exactly $p^j$ homogeneous solutions, for $j\in \{0,1,\ldots, s m\}$? In a more algebraic language we want to count the number of module homomorphisms $A:\Z_{p^s}^m\to \Z_{p^{s}}^n$ such that $|\ker(A)|=p^j$. We let $\E(n\times m,p^s,p^j)$ denote this number.	

For the case $s=1$ (when $\Z_{p^s}$ is a field) George von Landsberg proved a general formula in 1893,\cite{Landsberg1893}.
He proved that 
\begin{equation}\label{eq:Landsberg}
		\E(n\times m, p,p^j)=\binom{m}{j}_p \prod_{i=0}^{m-j-1}(p^n-p^i),
\end{equation}
where $\binom{m}{j}_p$ is a so called Gaussian binomial coefficient. 
However, there seems to be few published general results for the case $s>1$ except when we have a unique solution ($j=0$). 

According to \cite{Overbey2005}, when discussing possible number of Hill ciphers, a formula for the number of invertible square $n\times n$ matrices is 
\begin{equation}\label{eq:Overbey} 
	\E(n\times m, p^s,1)=p^{(s-1)n^2}\prod_{u=0}^{n-1}(p^n-p^u).
\end{equation}
The same formula can also be found in \cite{Han2006}.
Results on counting matrices over finite fields can also be found in for example \cite{Laksov1994}.

In this article we present recursive formulas for counting matrices over $\Z_{p^s}$ using generalized Euler $\varphi$-functions. We will also present explicit formulas for the case when $j\leq s$ and $n\geq m$. 

In Section \ref{sec2} we recall the definition and formulas for Gaussian binomial coefficients, defined by Gauss in \cite{Gauss1808}. We also define a generalization of Euler's $\varphi$-function. 
In Section \ref{sec3} we introduce further notations and derive recursive relations.  	
In Section \ref{sec4} we present the explicit formula for $\E(n\times m,p^s,p^j)$ for $j<s$ and in Section \ref{sec5} we handle the case when $j=s$.

Section \ref{sec6} contains an investigation of the sum $\sum_{j=0}^s \E(n\times n,p^s,p^j)$ in the quadratic case. We apply the result to estimate the probability that the formula $\gcd(\det(A),p^s)$ gives the correct number of solutions to the system.
%%%%%%%%%%%%%%%%%%%%%%%%%%%%%%%%%%%
\section{Preliminaries}\label{sec2}
%%%%%%%%%%%%%%%%%%%%%%%%%%%%%%%%%%%
%Gaussian binomial coefficients
%Generalization of Euler's phi function

We recall the definition of the Gaussian binomial coefficients (or $q$-binomial coefficients). For details we refer to a books in combinatorics, for example \cite{Stanley1997}. 
\begin{definition}
	Let $n$ and $k$ be non-negative integers. If $k\leq n$ we define
	\begin{equation}\label{eq:qbinom}
	\binom{n}{k}_q=\frac{(1-q^n)(1-q^{n-1})\cdots (1-q^{n-k+1})}{(1-q)(1-q^2)\cdots (1-q^{k})}.
	\end{equation}
	If $k=0$ we let $\binom{n}{k}_q=1$. If $k>n$ we set $\binom{n}{k}_q=0$.
\end{definition}
We also have the following analog of Pascal's identity
\begin{equation}\label{pascal}
\binom{n}{k}_q=\binom{n-1}{k}_q+q^{n-k}\binom{n-1}{k-1}_q.
\end{equation}
It can also be written as
\begin{equation}\label{pascal2}
\binom{n}{k}_q=\binom{n-1}{k-1}_q+q^{k}\binom{n-1}{k}_q.
\end{equation}
By induction we can easily prove that $\binom{n}{k}_q$ is a polynomial in $q$ by repeated use of \eqref{pascal}.
We can also easily prove that
\begin{equation}\label{q-degree}
	\deg(\binom{n}{k}_q)=k(n-k).
\end{equation}
We end the revision on Gaussian binomial coefficients with two formulas
\begin{equation}\label{eq:q-bin_sum}
	\sum_{i=0}^{j} p^i\binom{n+i}{i}_q=\binom{n+j+1}{j}_q,
\end{equation} 
and
\begin{equation}\label{eq:q_bin_prod}
	\sum_{k=0}^h\binom{n}{k}_q\binom{m}{h-k}_q q^{(n-k)(h-k)}=\binom{m+n}{h}_q
\end{equation}
The first is proved by induction and the latter is the so called $q$-analogue of Vandermonde's identity.
For the proof and further details we again refer to \cite{Stanley1997}.

In this article we will use a generalization of Euler's $\varphi$-function.
\begin{definition}
	For a non-negative integer $c$ let $\varphi_n(c)$ denote the number of $n$-tuples $(a_1,a_2,\ldots, a_n)$ where $1\leq a_i\leq c$ and at least one $a_i$ is relative prime to $c$.
\end{definition}
Let $p$ be a prime number. For $c=p^s$ we have
\[
	\varphi_n(p^s)=(p^{s})^n-(p^{s-1})^{n}=(p^{n(s-1)})(p^n-1),
\]
for $s\geq 1$. For $s=0$, $\varphi_n(p^0)=\varphi_n(1)=1$.

Note that for $n=1$ we have the usual Euler's $\varphi$-function.
But, note also that $\varphi_n$ does not inherit all the properties of the $\varphi$-function. For example $\varphi_n$ is not multiplicative for $n > 1$.

%%%%%%%%%%%%%%%%%%%%%%%%%%%%%%%%%%%%%%%%
\section{Recursive formulas}\label{sec3}
%%%%%%%%%%%%%%%%%%%%%%%%%%%%%%%%%%%%%%%%
%More notations
%Recursive formulas
In this section we will derive recursive formulas for $\E(n\times m,p^s,p^j)$. To do this we will need to look at some specific subsets of the set of $n\times m$-matrices having $p^j$ homogeneous solutions.

\begin{definition}Let $\tildeE(n\times m, p^s,p^j)$ denote the number of matrices $A$ having at least one invertible element modulo $p^s$ and giving $p^j$ homogeneous solutions to the system $Ax\equiv 0\pmod{p^s}$. We will call such matrices \emph{relatively prime}.
\end{definition}
\begin{definition}
Let $\tildeE_i(n\times m, p^s,p^j)$ denote the number of relatively prime matrices (giving $p^j$ solutions) where the first invertible element occurs in the $i+1$ column. The first $i$ columns will all be divisible by $p$.
\end{definition}
We can now count the relatively prime matrices by using $\tildeE_i$.Note that the first $i$ columns will all be divisible by $p$.

\begin{proposition}\label{prop:tildeEsum}
		Let $A$ be a matrix where the first $i$ columns are divisible by $p$, then the system $Ax\equiv 0\pmod{p^s}$ has at least $p^i$ solutions. Moreover
		\[
			\tildeE(n\times m, p^s,p^j)=\sum_{i=0}^{\min(j,m-1)}\tildeE_i(n\times m, p^s,p^j).
		\]
\end{proposition}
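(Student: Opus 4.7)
The plan is to handle the two assertions separately, starting with the lower bound on the number of solutions and then deriving the sum as a disjoint-union count.

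For the first claim, I would construct $p^i$ explicit solutions. Assume columns $1, \dots, i$ of $A$ are divisible by $p$, so the $k$-th column can be written as $p \vek{b}_k$ for $k=1,\dots,i$. Consider vectors of the form $\vek{x} = (p^{s-1} y_1, \dots, p^{s-1} y_i, 0, \dots, 0)$ with $y_k \in \{0, 1, \dots, p-1\}$. Then $A\vek{x} = \sum_{k=1}^{i} p^{s} y_k \vek{b}_k \equiv 0 \pmod{p^s}$, so each such $\vek{x}$ lies in $\ker(A)$. Varying the $y_k$ independently produces $p^i$ distinct vectors in $\Z_{p^s}^m$, and hence at least $p^i$ homogeneous solutions.

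For the sum formula, I would partition the set of relatively prime matrices with $p^j$ solutions according to the index of the first column containing an invertible entry. Given such a matrix $A$, since it has at least one invertible entry, there exists a smallest index $k \in \{1, \dots, m\}$ such that column $k$ of $A$ contains an invertible element; write $k = i+1$, so that $0 \le i \le m-1$. By construction, the first $i$ columns of $A$ are all divisible by $p$, and column $i+1$ is not. By the first part of the proposition, $A$ then has at least $p^i$ solutions; since $A$ has exactly $p^j$ solutions, this forces $p^i \le p^j$, i.e., $i \le j$. Combining the two upper bounds gives $0 \le i \le \min(j, m-1)$.

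Finally, I would note that the index $i$ is uniquely determined by $A$, so the sets counted by $\tildeE_i(n\times m, p^s, p^j)$ for distinct values of $i$ are pairwise disjoint, and their union is exactly the set counted by $\tildeE(n\times m, p^s, p^j)$. Summing over the admissible range yields the stated identity. There is no real obstacle here; the only subtle point is justifying that the range of summation is $\min(j, m-1)$ rather than just $m-1$, which is precisely where the first part of the proposition is invoked.
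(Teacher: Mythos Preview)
Your proposal is correct and follows essentially the same approach as the paper: both exhibit $p^i$ solutions coming from perturbations of the first $i$ coordinates by multiples of $p^{s-1}$, and both obtain the sum formula by partitioning relatively prime matrices according to the first column containing a unit, using the first part to cut the range down to $\min(j,m-1)$. The only cosmetic difference is that the paper phrases the first part as ``from any solution one obtains $p$ more by varying $x_l$,'' whereas you directly write down the $p^i$ solutions with trailing zeros; the content is the same.
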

\begin{proof}
	It is clear that
	\[
		\tildeE(n\times m, p^s,p^j)=\sum_{i=0}^{m-1}\tildeE_i(n\times m, p^s,p^j).
	\]
	We will now show that $\tildeE_i(n\times m, p^s,p^j)=0$ for $i>j$. 
	We can write the system as
	\[
		A_1 x_1+ A_2 x_2+\ldots + A_i x_i+\ldots + A_m x_m\equiv 0\pmod{p^s},
	\]
	where $A_l$ denotes a column of $A$. 
	Let $1\leq l\leq i$, then $A_l=p B_l$ for some $B_l\in \Z_{p^{s-1}}^n$. For a given solution with coordinates $x_l$ also $x_l+c p^{s-1}$, where $c=0,1,\ldots p-1$, is also a solution since $A_l (x_l+c p^{s-1})\equiv A_l x_1+B_l c p^s\equiv A_l x_l\pmod{p^s}$. Hence there are at least p choices for $x_l$ for $1\leq l\leq i$, so the system has at least $p^i$ solutions.
	Hence, for $i>j$ we must have $\tildeE_i(n\times m,p^s,p^j)=0$. 
\end{proof}

There is also a simple formula connecting $\E(n\times m,p^s,p^j)$ and $\tildeE(n\times m,p^s,p^j)$.
\begin{proposition}\label{prop:p_reduce}
	We have
	\[
	\E(n\times m,p^s,p^j)=\E(n\times m, p^{s-1},p^{j-m})+\tildeE (n\times m,p^s,p^j).
	\]
\end{proposition}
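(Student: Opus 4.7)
The plan is to partition the set of $n\times m$ matrices over $\Z_{p^s}$ with $p^j$ solutions into two disjoint pieces and identify each piece with one term on the right-hand side. A matrix $A$ either has at least one entry invertible modulo $p^s$ (in which case it is relatively prime and contributes to $\tildeE(n\times m,p^s,p^j)$), or else every entry of $A$ is divisible by $p$. So the only thing to prove is that the count of matrices in the second class equals $\E(n\times m,p^{s-1},p^{j-m})$.

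For the second class, I would observe that $A\mapsto A'$, where $A=pA'$, sets up a bijection between $n\times m$ matrices over $\Z_{p^s}$ with all entries divisible by $p$ and $n\times m$ matrices over $\Z_{p^{s-1}}$. The next step is to relate the kernel sizes. Since $A=pA'$, the congruence $Ax\equiv 0\pmod{p^s}$ is equivalent to $A'x\equiv 0\pmod{p^{s-1}}$; the latter only depends on $x$ modulo $p^{s-1}$. Hence if $\bar{x}\in\Z_{p^{s-1}}^m$ denotes the reduction of $x$, each $\bar{x}$ with $A'\bar{x}\equiv 0\pmod{p^{s-1}}$ lifts to exactly $p^m$ vectors $x\in\Z_{p^s}^m$, giving
\[
\bigl|\ker_{\Z_{p^s}}(A)\bigr|=p^m\cdot\bigl|\ker_{\Z_{p^{s-1}}}(A')\bigr|.
\]
Consequently $A$ has $p^j$ solutions over $\Z_{p^s}$ if and only if $A'$ has $p^{j-m}$ solutions over $\Z_{p^{s-1}}$, and the number of such $A$ equals $\E(n\times m,p^{s-1},p^{j-m})$.

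Combining the two disjoint counts yields the claimed identity. Boundary cases are automatic: if $j<m$, then no matrix divisible by $p$ can give only $p^j$ solutions (since any such matrix already has at least $p^m$ solutions, as noted in Proposition \ref{prop:tildeEsum} applied with $i=m$), and the term $\E(n\times m,p^{s-1},p^{j-m})$ is interpreted as $0$, so the formula is consistent. The main (mild) obstacle is justifying cleanly the $p^m$-to-$1$ lifting, i.e., that the solution set is a union of full fibers of the reduction map $\Z_{p^s}^m\to\Z_{p^{s-1}}^m$; this follows because divisibility of every column by $p$ makes the system depend only on $x\bmod p^{s-1}$.
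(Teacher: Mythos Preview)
Your proof is correct and follows essentially the same approach as the paper: both partition the matrices according to whether some entry is a unit, write the non-unit matrices as $A=pA'$ with $A'$ over $\Z_{p^{s-1}}$, and use the $p^m$-to-$1$ lifting of solutions to identify that class with $\E(n\times m,p^{s-1},p^{j-m})$. Your write-up is in fact somewhat more explicit than the paper's about the bijection and the boundary case $j<m$.
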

\begin{proof}
	Assume that $A$ is an $n\times m$-matrix such that $Ax\equiv 0\pmod{p^s}$ has $p^j$ solutions.
	Two cases can occur. Either $A$ is a relatively prime matrix that is $\gcd(A,p^s)=\gcd(A,p)=1$ or $\gcd(A,p^s)\geq p$. If $\gcd(A,p)=1$ we count the matrix in $\tildeE (n\times m,p^s,p^j)$. If $\gcd(A,p^s)\geq p$ we let $A=p B$ for some matrix $B$ with entries in the set $\Z_{p^{s-1}}$. By the same arguments as in the proof of Proposition \ref{prop:tildeEsum} a solution to $Bx\equiv 0\pmod{p^{s-1}}$ can be lifted to $p^m$ solutions in $Ax\equiv 0\pmod{p^s}$. So, the matrix will be counted in 
	$\E(n\times m, p^{s-1},p^{j-m})$.
\end{proof}

\begin{remark}
In the case where $j<m$ (the dimension of the solution is less that the number of columns) we have
$\E(n\times m, p^s,p^j)=\tildeE(n\times m, p^s,p^j)$.
\end{remark}

By repeated use of Proposition \ref{prop:p_reduce} we get the following result.
\begin{proposition}\label{prop:gcd_reduce}
	With the above notations
	\[
		\E(n\times m,p^s,p^j)=\negspace\sum_{k=\max(0,j-s(m-1))}^{\lfloor \frac{j}{m}\rfloor}\negspace\widetilde{\E}(n\times m,p^{s-k},p^{j-mk}).
	\]
\end{proposition}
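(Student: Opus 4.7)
My plan is to iterate Proposition \ref{prop:p_reduce}. A single application rewrites $\E(n\times m, p^s, p^j)$ as $\tildeE(n\times m, p^s, p^j) + \E(n\times m, p^{s-1}, p^{j-m})$, so a straightforward induction on $k$ gives, for $0 \le k \le s$,
\[
\E(n\times m, p^s, p^j) = \E(n\times m, p^{s-k}, p^{j-km}) + \sum_{i=0}^{k-1} \tildeE(n\times m, p^{s-i}, p^{j-im}).
\]
When $j < sm$, setting $k = \lfloor j/m\rfloor \le s-1$ makes the remainder's second-argument exponent $j - km$ lie in $[0, m)$, so by the remark following Proposition \ref{prop:p_reduce} we have $\E(n\times m, p^{s-k}, p^{j-km}) = \tildeE(n\times m, p^{s-k}, p^{j-km})$, and absorbing this into the sum yields
\[
\E(n\times m, p^s, p^j) = \sum_{i=0}^{\lfloor j/m\rfloor} \tildeE(n\times m, p^{s-i}, p^{j-im}).
\]

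To align this with the stated lower bound $\max(0, j - s(m-1))$, I would show that the summands with $i < j - s(m-1)$ vanish. The key observation is that any relatively prime matrix $A$ over $\Z_{p^t}$ satisfies $|\ker A| \le p^{t(m-1)}$: moving an invertible entry to position $(1,1)$ by row/column swaps and then using it to clear the rest of its row and column via kernel-size-preserving elementary operations reduces $A$ to the block form $\mathrm{diag}(u, A')$ with $A'$ of size $(n-1)\times(m-1)$, whence $|\ker A| = |\ker A'| \le p^{t(m-1)}$. Consequently $\tildeE(n\times m, p^{s-i}, p^{j-im}) = 0$ whenever $j - im > (s-i)(m-1)$, which rearranges to $i < j - s(m-1)$, and these zero terms can be dropped.

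The main technical obstacle is this auxiliary kernel bound for relatively prime matrices; the rest of the argument is bookkeeping on the recursion. A minor boundary case is $j = sm$, where $\lfloor j/m\rfloor = s$ pushes the iteration formally to $\Z_1$. Here one iterates $s$ times and verifies directly that the remainder $\E(n\times m, 1, 1) = 1$ coincides with $\tildeE(n\times m, 1, 1) = 1$, the unique surviving summand at $k = s$ (all other summands vanish by the kernel bound, since $(s-i)m > (s-i)(m-1)$ for $i < s$).
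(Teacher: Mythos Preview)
Your argument is correct. In fact, the paper introduces this proposition precisely by saying it follows ``by repeated use of Proposition~\ref{prop:p_reduce}'', and your proof is exactly that iteration carried out in full; the paper then writes out a \emph{direct} alternative instead, partitioning matrices according to $p^k=\gcd(A,p^s)$ and reading off the bounds $mk\le j\le s(m-1)+k$ from the minimum and maximum possible number of solutions for matrices with that gcd. Your lower-bound step---the kernel bound $|\ker A|\le p^{t(m-1)}$ for relatively prime $A$ over $\Z_{p^t}$, obtained via one step of Gaussian elimination---is the same fact the paper uses for its upper bound $p^{s(m-1)+k}$, just phrased at the level of the reduced matrix rather than the original one. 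Your explicit treatment of the degenerate case $j=sm$ (where the sum collapses to the single term $k=s$ over $\Z_1$) is a detail the paper leaves implicit.
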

\begin{proof}
	We can also prove this directly. 
	Let $d=p^k=\gcd(A,p^s)$. Then the system $Ax\equiv 0\pmod{p^s}$ have at least $p^{mk}$ solutions. This corresponds to a unique solution modulo $p^{s-k}$ that lifts to $p^{km}$ solutions modulo $p^s$. (In a similar way as in the proof of Proposition \ref{prop:tildeEsum}).
	
	At most we can have $p^{s(m-1)+k}$ solutions. This corresponds to a matrix where all but one columns are identically 0 and one column is exactly divisible by $p^k$.  Hence, for given $k$ we have the following limits of the number of solutions $p^j$
	\[
		m k\leq j\leq s(m-1)+k.
	\]
	For a given $j$ we have $k\leq \frac{j}{m}$ and therefore $k\leq \lfloor\frac{j}{m}\rfloor$.
	We also have $j-s(m-1)\leq k$ so $k\geq \max(0, j-s(m-1))$. To sum up we have
	\[
		\max(0,j-s(m-1))\leq k\leq \left\lfloor\frac{j}{m}\right\rfloor.\qedhere
	\]
\end{proof}
%Example including a Gotland map%

%Reducing to smaller systems
We now want to find a formula for $\tildeE(n\times m,p^s,p^j)$ in terms of the number of matrices of type $(n-1)\times (m-1)$. 
Let $A=(a_{kl})$ be an $n\times m$-matrix such that $\gcd(A,p^s)=1$ and where the first $i$ columns are divisible by $p$ and an invertible element exists in column $i+1$. We can then without restriction assume that $a_{1(i+1)}$ is invertible. We also have
$a_{kl}=p b_{kl}$ for some $b_{kl}\in \Z_{p^{s-1}}$ for $1\leq k \leq n$ and $1\leq l\leq i$. By using Gaussian elimination on $Ax\equiv 0\pmod{p^s}$ with $a_{1(i+1)}$ as pivot we get the equivalent system $\widehat{A}x\equiv 0\pmod{p^s}$ where $\widehat{A}$ is given by

\begin{equation}\label{eq:red_matrix}
\begin{pmatrix}
	p b_{11} & \cdots & p b_{1i} & a_{1(i+1)} & a_{1(i+2)} & \cdots & a_{1m}\\
	c_{21} & \cdots & c_{2i} & 0 & c_{2(i+2)} & \cdots &c_{2m}\\
	\vdots & &\vdots & \vdots & \vdots & & \vdots\\
	c_{n1} & \cdots & c_{ni} & 0 & c_{n(i+2)} & \cdots & c_{nm}
\end{pmatrix},
\end{equation}
where
\[
	c_{jk}=
	\begin{cases}
		-(a_{1(i+1)})^{-1} a_{j(i+1)}p b_{1k}+p b_{jk}\mod p^s,\ k\leq i,\\
		-(a_{1(i+1)})^{-1} a_{j(i+1)}a_{1k}+ a_{jk}\mod p^s,\ k\geq i+2.
	\end{cases}
\]
Note that $p\mid c_{jk}$ for all $j$ and $k\leq i$.
Hence, $C=(c_{jk})$ is an $(n-1)\times (m-1)$-matrix with at least the first $i$ columns not being invertible.

We denote by $\E_i(n\times m,p^s,p^j)$ the number of matrices giving $p^j$ solutions, such that (at least) the first $i$ columns of the matrix are divisible by $p$. The following lemma relates $\E_i$ and $\tildeE_k$ for $k\geq i$.
\begin{lemma}\label{lma:reducing1}
	Let $0 \leq i\leq m-1$. We have
	\[
		\E_i(n\times m, p^s,p^j)=\E(n\times m,p^{s-1}, p^{j-m})+\sum_{k=i}^{\min(j,m-1)}\tildeE_k(n\times m,p^s,p^j).
	\]
	We also have $\E_i(n\times m, p^s,p^j)=0$ for $j< i$.
\end{lemma}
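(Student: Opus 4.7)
The plan is to split the set of matrices counted by $\E_i(n\times m, p^s, p^j)$ into two disjoint classes according to whether \emph{every} column is divisible by $p$ (so the matrix is not relatively prime) or whether there exists some column whose entries contain a unit modulo $p^s$. Every matrix counted by $\E_i$ has its first $i$ columns divisible by $p$, so if the matrix is relatively prime then the index $k$ of its first column containing an invertible entry satisfies $k \geq i$, and such matrices are exactly the ones counted by $\tildeE_k(n\times m, p^s, p^j)$. Summing over the admissible values of $k$ gives the second term.

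For the range of $k$, the upper bound comes from Proposition~\ref{prop:tildeEsum}: the first $k$ columns being divisible by $p$ forces at least $p^k$ solutions, so $\tildeE_k(n\times m, p^s, p^j)=0$ once $k > j$; combined with the obvious cap $k \leq m-1$ (a relatively prime matrix must have some invertible column), this yields the upper limit $\min(j, m-1)$.

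For the non-relatively-prime class, every entry of $A$ is divisible by $p$, so I write $A = pB$ with $B$ an $n\times m$ matrix over $\Z_{p^{s-1}}$. Exactly as in the argument in Proposition~\ref{prop:tildeEsum} (or Proposition~\ref{prop:p_reduce}), the solution set of $Ax \equiv 0 \pmod{p^s}$ consists of all lifts $x + p^{s-1} c$ of solutions $x$ to $Bx \equiv 0 \pmod{p^{s-1}}$, with $c \in \Z_p^m$. Hence the number of solutions is exactly $p^m$ times the number of solutions of $Bx \equiv 0 \pmod{p^{s-1}}$, so $B$ must contribute exactly $p^{j-m}$ solutions. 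The map $A \mapsto B$ is a bijection between this class and the matrices counted by $\E(n\times m, p^{s-1}, p^{j-m})$, giving the first term. (If $j < m$ this term is zero, which matches the remark after Proposition~\ref{prop:p_reduce}.)

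Finally, the assertion $\E_i(n\times m,p^s,p^j)=0$ for $j<i$ is an immediate consequence of Proposition~\ref{prop:tildeEsum}: if the first $i$ columns of $A$ are divisible by $p$, then $Ax \equiv 0 \pmod{p^s}$ has at least $p^i$ solutions, so the number of solutions cannot equal $p^j$ with $j < i$. No step here looks like a real obstacle; the only thing to be careful about is accounting for the non-relatively-prime contribution and verifying that the definition of $\E_i$ (``at least'' the first $i$ columns divisible by $p$) is what makes the partition clean, so that matrices whose first invertible column occurs strictly later than $i+1$ are correctly absorbed into the $\tildeE_k$ terms with $k>i$.
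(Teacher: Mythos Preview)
Your proof is correct and follows essentially the same approach as the paper: partition the matrices counted by $\E_i$ into those entirely divisible by $p$ (handled via the bijection $A=pB$ as in Proposition~\ref{prop:p_reduce}) and the relatively prime ones, which necessarily have their first invertible column at index $k\geq i$ and hence contribute the sum of $\tildeE_k$. If anything, your write-up is cleaner than the paper's, which passes through an intermediate displayed equation with the sum starting at $k=0$ that is literally the formula for $\E_0$ rather than $\E_i$; you avoid that ambiguity by arguing directly that only $k\geq i$ can occur.
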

\begin{proof}
	From start we have matrices where the first $i$ columns are divisible by $p$, and they give $p^j$ solutions.
	By the Propositions \ref{prop:p_reduce} and \ref{prop:tildeEsum} we get
	\[
		\E_i(n\times m, p^s,p^j)=\E(n\times m,p^{s-1}, p^{j-m})+\sum_{k=0}^{\min(j,m-1)}\tildeE_k(n\times m,p^s,p^j).
	\]
	If there is an invertible element, the leftmost one must occur in one of the columns $i+1$ to $m$ and therefore $\tildeE_k(n\times m,p^s,p^j)=0$ for $k<i$.
	
	The second statement of the lemma follows by the same argument  as in the proof of Proposition \ref{prop:tildeEsum} that $\tildeE_i(n\times m,p^s,p^j)=0$ for $i>j$.
\end{proof}

We now try to find an expression for $\tildeE_i$ of $n\times m$-matrices in terms of $\E$ and $\E_i$ of matrices of type $(n-1)\times (m-1)$.  
\begin{lemma}\label{lma:reducing2}
Let $i\leq j$. For $i<m-1$ we have 
\[
	\tildeE_i(n\times m,p^s,p^j)=\varphi_n(p^s)(p^{s-1})^i (p^s)^{m-(i+1)}\E_i((n{-}1){\times}(m{-}1),p^s,p^j).
\]
For $i=m-1$ we have
\[
\tildeE_{m-1}(n\times m,p^s,p^j)=\varphi_n(p^s)(p^{s-1})^{m-1}\E((n{-}1){\times}(m{-}1),p^{s-1},p^{j-(m-1)}).
\]
\end{lemma}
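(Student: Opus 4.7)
The plan is to promote the Gaussian-elimination construction introduced just before the lemma into an explicit bijection between $\tildeE_i$ and triples $(\vec v,\vec w,C)$, where $\vec v\in\Z_{p^s}^n$ is a column with at least one invertible entry, $\vec w\in \Z_{p^s}^{m-1}$ is a ``special row'' vector, and $C$ is an $(n-1)\times(m-1)$-matrix whose first $i$ columns are divisible by $p$. Given $A\in\tildeE_i$, I would let $j_0$ be the row of the topmost invertible entry in column $i+1$ of $A$; then take $\vec v$ to be column $i+1$ of $A$, $\vec w$ to be row $j_0$ of $A$ with the $(i+1)$-th entry removed, and $C$ to be the reduced matrix of the excerpt with $j_0$ playing the role of row $1$.

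To verify bijectivity I would give the inverse explicitly: $j_0$ is determined by $\vec v$, the entries in column $i+1$ and in row $j_0$ of $A$ come from $\vec v$ and $\vec w$, and the remaining entries are filled in by
\[
a_{jk}=c_{jk}+(a_{j_0(i+1)})^{-1}\, a_{j(i+1)}\, a_{j_0 k},\qquad j\neq j_0,\ k\neq i+1.
\]
Divisibility by $p$ in columns $1,\ldots,i$ of the reconstructed $A$ follows because $c_{jk}$ and $a_{j_0 k}$ are both divisible by $p$ for $k\leq i$. Preservation of the solution count is automatic: the row operations are invertible, the pivot equation in row $j_0$ of $\widehat A$ determines $x_{i+1}$ uniquely from the other coordinates, and the remaining $n-1$ equations are exactly $Cx'\equiv 0\pmod{p^s}$ where $x'$ is $x$ with the $(i+1)$-th coordinate removed. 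Hence $A$ has $p^j$ solutions iff $C$ does.

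Counting the triples when $i<m-1$ yields $\varphi_n(p^s)$ from $\vec v$, $(p^{s-1})^i$ from the first $i$ entries of $\vec w$ (divisible by $p$), $(p^s)^{m-i-1}$ from the remaining entries of $\vec w$ (arbitrary), and $\E_i((n{-}1){\times}(m{-}1),p^s,p^j)$ from $C$, giving the first formula. For $i=m-1$ the $(p^s)^{m-i-1}$ factor is an empty product, and additionally every column of $C$ is divisible by $p$ since every column of $A$ except column $m$ is. Writing $C=pD$ for a unique $D$ over $\Z_{p^{s-1}}$, the system $Cx'\equiv 0\pmod{p^s}$ is equivalent to $Dx'\equiv 0\pmod{p^{s-1}}$; by the lifting argument in the proof of Proposition~\ref{prop:tildeEsum}, each solution modulo $p^{s-1}$ lifts to $p^{m-1}$ solutions modulo $p^s$, so $C$ contributes to $p^j$ solutions exactly when $D$ contributes to $p^{j-(m-1)}$ solutions, replacing $\E_i$ by $\E((n{-}1){\times}(m{-}1),p^{s-1},p^{j-(m-1)})$.

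The main subtlety is the ``without restriction'' phrase used in the excerpt: literally restricting to $a_{1(i+1)}$ invertible would produce only $\varphi(p^s)(p^s)^{n-1}$ valid columns, strictly less than $\varphi_n(p^s)$. Letting $j_0$ depend on $\vec v$ (as the topmost invertible row) side-steps this by effectively summing over all possible pivot rows; a direct check shows $\sum_{j_0=1}^n(p^s-p^{s-1})(p^{s-1})^{j_0-1}(p^s)^{n-j_0}=\varphi_n(p^s)$, so the single factor $\varphi_n(p^s)$ is exactly right.
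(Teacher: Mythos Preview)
Your proof is correct and follows the same Gaussian-elimination reduction as the paper, counting the pivot column, the pivot row, and the reduced $(n{-}1)\times(m{-}1)$ matrix separately. Your explicit bijection via the topmost-invertible-row index $j_0$ is a more careful version of the paper's informal ``without restriction assume $a_{1(i+1)}$ is invertible''; the paper simply asserts that column $i{+}1$ can be chosen in $\varphi_n(p^s)$ ways without spelling out the telescoping sum you give.
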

\begin{proof}
	We consider systems $Ax\equiv 0\pmod{p^s}$ where $A$ is an $n\times m$-matrix, where the first $i$ columns are non-invertible and there exists an invertible element in column $(i+1)$. We can perform Gaussian elimination like in \eqref{eq:red_matrix}. We can also assume that $a_{1(i+1)}$ is invertible. 
	
	First assume that $i<m-1$. Let $C$ be the matrix from \eqref{eq:red_matrix}. Then the system has $p^j$ solutions if and only if the system $Cx\equiv 0\pmod{p^s}$ has $p^j$ solutions. In this case we know the first $i$ columns of $A$ are divisible by $p$ (but maybe not the $m-(i+1)$ last columns). Hence, in this case we can choose matrix A in 
	\[
		\varphi_n(p^s)(p^{s-1})^i (p^s)^{m-(i+1)}\E_i((n-1)\times (m-1),p^s,p^j)
	\]
	ways. In fact we choose the $(i+1)$-column in $\varphi_n(p^s)$ ways, each of the $i$ elements $b_{11}$ to $b_{1i}$ in $p^{s-1}$ ways and each of the $m-(i+1)$ elements $a_{1(i+2)}$ to $a_{1m}$ in $p^s$ ways.
	
	If $i=m-1$ then all the columns of the matrix $C$ are divisible by $p$. The system  $Ax\equiv 0\pmod{p^s}$ has $p^j$ solutions if the system $Cx\equiv 0\pmod{p^{s}}$ has $p^{j-(m-1)}$ solutions modulo $p^{s-1}$. (Each solution modulo $p^{s-1}$ can be lifted to $p^{m-1}$ solutions modulo $p^s$.) Hence, in the $i=m-1$ case we can choose the matrix in
	\[
		\varphi_n(p^s) (p^{s-1})^{m-1}\E((n-1)\times (m-1),p^{s-1},p^{j-(m-1)})
	\]
	ways.
\end{proof}

We end this section by combining Lemma \ref{lma:reducing1} and Lemma \ref{lma:reducing2} and directly get a recursive relations for $\tildeE_i$ that will be used in the derivation of the explicit formula in next section.
\begin{proposition}\label{prop:recursion_i}
	For $i\leq m-1$, we have 
	\begin{multline*}
		\tildeE_{i}(n\times m,p^s,p^{j})=\varphi_n(p^s)(p^{s-1})^i (p^s)^{m-(i+1)}\\
		\times \left(
	\E((n{-}1){\times}(m{-}1),p^{s-1},p^{j-(m-1)})+\sum_{k=i}^{\min(j,m-2)} \tildeE_k((n{-}1){\times}(m{-}1),p^s,p^{j})\right).
	\end{multline*}
	For $j<m-1$,  we set $\E((n{-}1){\times}(m{-}1),p^{s-1},p^{j-(m-1)})=0$ and for $i=m-1$ the sum is empty.
	As always, $\tildeE_i(n\times m,p^s,p^j)=0$ if $j<i$.
\end{proposition}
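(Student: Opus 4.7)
The plan is to obtain the formula by a direct combination of Lemmas \ref{lma:reducing1} and \ref{lma:reducing2}, splitting into the two natural cases $i < m-1$ and $i = m-1$.

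For $i < m-1$, I would first invoke Lemma \ref{lma:reducing2} to write
\[
\tildeE_i(n\times m,p^s,p^j) = \varphi_n(p^s)(p^{s-1})^i (p^s)^{m-(i+1)}\,\E_i((n{-}1){\times}(m{-}1),p^s,p^j).
\]
Then I would apply Lemma \ref{lma:reducing1} with matrix dimensions $(n{-}1)\times (m{-}1)$ in place of $n\times m$; this expands $\E_i((n{-}1){\times}(m{-}1),p^s,p^j)$ as
\[
\E((n{-}1){\times}(m{-}1),p^{s-1},p^{j-(m-1)}) + \sum_{k=i}^{\min(j,m-2)}\tildeE_k((n{-}1){\times}(m{-}1),p^s,p^j).
\]
Substituting this into the displayed equation yields exactly the expression claimed in the proposition.

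The boundary case $i = m-1$ needs to be checked separately because Lemma \ref{lma:reducing1} (for $(n{-}1)\times(m{-}1)$ matrices) requires $i \leq m-2$. Here one argues directly from Lemma \ref{lma:reducing2}: it gives $\tildeE_{m-1}(n\times m,p^s,p^j) = \varphi_n(p^s)(p^{s-1})^{m-1}\E((n{-}1){\times}(m{-}1),p^{s-1},p^{j-(m-1)})$, which agrees with the formula to be proved, since the prefactor reduces to $(p^{s-1})^{m-1}$ (as $(p^s)^{m-(i+1)} = (p^s)^0 = 1$) and the sum from $k=m-1$ to $\min(j,m-2)$ is empty. Similarly, when $j < m-1$ the term $\E((n{-}1){\times}(m{-}1),p^{s-1},p^{j-(m-1)})$ is interpreted as $0$ (no matrix can give $p^{j-(m-1)}$ solutions when the exponent is negative), matching the convention in Lemma \ref{lma:reducing1}, and the final statement $\tildeE_i = 0$ for $j < i$ is just Proposition \ref{prop:tildeEsum}.

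No step presents a real obstacle; the entire argument is a mechanical substitution of Lemma \ref{lma:reducing1} into Lemma \ref{lma:reducing2}. The only care needed is in the bookkeeping: verifying that the prefactor, the summation bounds, and the $\E$-term all degenerate correctly at the endpoints $i = m-1$ and $j < m-1$, so that a single formula covers all admissible $i$.
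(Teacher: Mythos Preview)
Your proposal is correct and matches the paper's approach exactly: the paper states that Proposition \ref{prop:recursion_i} follows directly by combining Lemma \ref{lma:reducing1} and Lemma \ref{lma:reducing2}, without spelling out any further details. Your case split $i<m-1$ versus $i=m-1$, the substitution of Lemma \ref{lma:reducing1} (applied to $(n{-}1)\times(m{-}1)$ matrices) into Lemma \ref{lma:reducing2}, and the boundary checks are precisely the intended argument.
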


%%%%%%%%%%%%%%%%%%%%%%%%%%%%%%%%%%%%
\section{Explicit formulas for $j<s$}\label{sec4}
%%%%%%%%%%%%%%%%%%%%%%%%%%%%%%%%%%%%
In this section we will find for $n\geq m$ an explicit formula for the number of $n\times m$-matrices $A$ over $\Z_{p^s}$ such that
$Ax\equiv 0\pmod{p^s}$ has exactly $p^j$ solutions for $j<s$. The case $j=s$ will be considered in the next section. 
In the proof we will reduce to smaller and smaller systems (by using Proposition \ref{prop:recursion_i}) finally ending in a system with an $n\times 1$-matrix. 

\begin{proposition}\label{prop:nx1}
The number of matrices of type $n\times 1$ that gives $p^j$ homogeneous solutions is
\[
	\E(n\times 1, p^s,p^j)=\varphi_n(p^{s-j}).
\]
If $j=s$ we have $\E(n\times 1, p^s,p^s)=\varphi_n(1)=1$ and 
if $j>s$ we have $\E(n\times 1, p^s,p^j)=0$.
\end{proposition}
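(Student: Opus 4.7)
The plan is to analyze $n\times 1$ systems directly. An $n\times 1$ matrix is a column $A=(a_1,\ldots,a_n)^T$ with $a_i\in\Z_{p^s}$, and the system $Ax\equiv 0\pmod{p^s}$ has a single scalar unknown $x\in\Z_{p^s}$ and demands $a_ix\equiv 0\pmod{p^s}$ for every $i$.

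First I would count, for a fixed $A$, how many solutions occur. For each $i$ set $k_i=\min(v_p(a_i),s)$ where $v_p$ is the $p$-adic valuation (with the convention $v_p(0)=\infty$, so that $a_i=0$ gives $k_i=s$). Then $a_ix\equiv 0\pmod{p^s}$ if and only if $p^{s-k_i}\mid x$. Intersecting these conditions over all $i$, the solution set is $p^{s-k}\Z_{p^s}$ with $k=\min_i k_i$, a subgroup of order $p^k$. Hence $A$ contributes to $\E(n\times 1,p^s,p^j)$ precisely when $\min_i k_i=j$.

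Next I would translate this into a count. For $j<s$ the condition means every $a_i$ is divisible by $p^j$ and at least one $a_i$ is not divisible by $p^{j+1}$. Writing $a_i=p^j b_i$ sets up a bijection between such $A$ and $n$-tuples $(b_1,\ldots,b_n)\in\Z_{p^{s-j}}^n$ such that at least one $b_i$ is coprime to $p$, equivalently coprime to $p^{s-j}$. By the definition of $\varphi_n$ this count is exactly $\varphi_n(p^{s-j})$, which is the claimed formula.

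For $j=s$ the condition forces $a_i=0$ for all $i$, leaving only the zero matrix, which matches $\varphi_n(1)=1$. For $j>s$ the matrix yields at most $p^s$ solutions in total, so $\E(n\times 1,p^s,p^j)=0$. There is no real obstacle here; the only small point requiring care is the uniform treatment of $a_i=0$, which the capping $k_i=\min(v_p(a_i),s)$ handles cleanly.
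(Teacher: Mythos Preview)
Your proof is correct and follows essentially the same approach as the paper: both arguments identify the solution count as $p^k$ where $p^k=\gcd(A,p^s)$ (equivalently $k=\min_i v_p(a_i)$), and then count the matrices with $k=j$ via the bijection $a_i=p^j b_i$ to land on $\varphi_n(p^{s-j})$. The only cosmetic difference is that the paper phrases things via $\gcd$ and dividing the system by $p^k$, whereas you use $p$-adic valuations directly.
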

\begin{proof}
For the case $j=s$ there is only one possible matrix, the zero matrix, so $\E(n\times 1, p^s,p^s)=1$.
For the case $j>s$ we have $\E(n\times 1, p^s,p^j)=0$ since there are at most $p^s$ solutions to the the system.
Let $A$ be an $n\times 1$-matrix
and consider the system
\begin{equation}\label{eq:n1}
	Ax\equiv 0\pmod{p^s}.
\end{equation}
Let $d=\gcd(A,p^s)$. We have $d=p^k$ for some $k\in\{0,1,\ldots, s-1\}$.
If we divide all the equations in \eqref{eq:n1} by $p^k$ we get
\begin{equation*}
\left\{
\begin{array}{r@{\;}c@{\;}l}
	b_1 x &\equiv& 0\pmod{p^{s-k}}\\
	b_2 x &\equiv& 0\pmod{p^{s-k}}\\
	&\vdots&\\
	b_n x &\equiv& 0\pmod{p^{s-k}},
	\end{array}
	\right.
\end{equation*}
where $a_i=b_i p^k$ and $0\leq b_i\leq p^{s-k}-1$. At least one of the $b_i$ must be relatively prime to $p$.
The corresponding equation has a unique solution ($x=0$) modulo $p^{s-k}$ and $p^k$ solutions modulo $p^s$. Note that all these solutions also solve all the other equations in the system \eqref{eq:n1}. So the system have $p^j$ solutions.

Hence, we can choose $A$ in $\varphi_n(p^{s-j})$ ways to get $p^j$ solutions.
This is the number of ways can we choose the $b_i$:s such that at least one of them is relatively prime to $p$.
\end{proof}

As an example how we can use the recursive results from last section we prove a formula for $\tildeE(n\times 2, p^s,p^j)$.
\begin{proposition}\label{prop:nx2}
	We have
	\[
		\tildeE(n\times 2,p^s,p^j)=
		\begin{cases}
		\varphi_n(p^s)\varphi_{n-1}(p^s)p^s,\ j=0\\
			\varphi_n(p^s)\varphi_{n-1}(p^{s-j})(p^s+p^{s-1}),\ 1\leq j\leq s\\
			 0,\ j>s.
		\end{cases}
	\]
\end{proposition}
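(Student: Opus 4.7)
The plan is to derive the formula by combining Proposition \ref{prop:tildeEsum} with the recursion from Proposition \ref{prop:recursion_i}, specializing to $m=2$, which reduces everything to counting $(n-1)\times 1$ matrices that are handled by Proposition \ref{prop:nx1}.

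First I would split according to the column of the leftmost invertible entry. By Proposition \ref{prop:tildeEsum}, for $m=2$ the sum runs over $i=0,\ldots,\min(j,1)$, so
\[
\tildeE(n\times 2,p^s,p^j)=\tildeE_0(n\times 2,p^s,p^j)+\tildeE_1(n\times 2,p^s,p^j),
\]
with the second term absent when $j=0$. Then I would apply Proposition \ref{prop:recursion_i} with $m=2$: the case $i=1=m-1$ has an empty sum, giving
\[
\tildeE_1(n\times 2,p^s,p^j)=\varphi_n(p^s)p^{s-1}\E((n{-}1){\times}1,p^{s-1},p^{j-1}),
\]
and the case $i=0$ with the sum running over $k=0$ only gives
\[
\tildeE_0(n\times 2,p^s,p^j)=\varphi_n(p^s)p^s\Bigl(\E((n{-}1){\times}1,p^{s-1},p^{j-1})+\tildeE_0((n{-}1){\times}1,p^s,p^j)\Bigr),
\]
with the convention that the first $\E$-term is $0$ when $j<m-1=1$, i.e.\ when $j=0$.

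Next I would evaluate the two building blocks. By Proposition \ref{prop:nx1}, $\E((n{-}1){\times}1,p^{s-1},p^{j-1})=\varphi_{n-1}(p^{s-j})$ for $1\le j\le s$ and vanishes for $j>s$. For $\tildeE_0((n{-}1){\times}1,p^s,p^j)$, note that a column with an invertible entry yields an equation $a_ix\equiv 0\pmod{p^s}$ with $a_i$ a unit, which forces $x=0$; hence this quantity equals $\varphi_{n-1}(p^s)$ when $j=0$ and equals $0$ when $j\ge 1$.

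Finally I would assemble the three cases. For $j=0$ only $\tildeE_0$ contributes and the $\E$-term is set to zero, yielding $\varphi_n(p^s)p^s\varphi_{n-1}(p^s)$. For $1\le j\le s$ the $\tildeE_0((n{-}1){\times}1,\ldots)$ term vanishes, so both $\tildeE_0$ and $\tildeE_1$ contain the common factor $\varphi_n(p^s)\varphi_{n-1}(p^{s-j})$, and adding them gives $\varphi_n(p^s)\varphi_{n-1}(p^{s-j})(p^s+p^{s-1})$. For $j>s$ both summands vanish. The only subtlety — and the one point I would take care with — is the boundary case $j=s$, where $\varphi_{n-1}(p^{s-j})=\varphi_{n-1}(1)=1$ and Proposition \ref{prop:nx1} must be read through the $j=s$ clause; once that is checked the formula matches the $1\le j\le s$ expression without modification.
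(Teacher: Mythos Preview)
Your proof is correct and follows essentially the same route as the paper: you split via Proposition \ref{prop:tildeEsum}, apply Proposition \ref{prop:recursion_i} with $m=2$ to express $\tildeE_0$ and $\tildeE_1$ in terms of $(n-1)\times 1$ data, and then evaluate these using Proposition \ref{prop:nx1} together with the observation that a relatively prime $(n-1)\times 1$ column forces the unique solution. The only minor difference is that for $j>s$ the paper argues directly that a relatively prime $n\times 2$ system has at most $p^s$ solutions, whereas you deduce the vanishing from the vanishing of the individual building blocks; both are fine.
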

\begin{proof}
	First consider the case where $j=0$ (unique solution). 
	From Proposition \ref{prop:tildeEsum} we have
	\[
		\tildeE(n\times 2, p^s,1)=\sum_{i=0}^{\max(j,m-1)}{\mkern-18mu}\tildeE_i(n\times 2, p^s,1)=\tildeE_0(n\times 2, p^s,1).
	\]
	From Proposition \ref{prop:recursion_i}  and Proposition \ref{prop:nx1} we have
	\[
		\tildeE_0(n\times 2, p^s,1)=\varphi_n(p^s)p^s\tildeE((n-1)\times 1, p^s,1)=\varphi_n(p^s)\varphi_{n-1}(p^s) p^s.
	\]
	
	Let us now assume that $1\leq j\leq s$. From Proposition \ref{prop:tildeEsum} we have
	\[
		\tildeE(n\times 2, p^s,p^j)=\tildeE_0(n\times 2, p^s,p^j)+\tildeE_1(n\times 2,p^s,p^j).
	\]
	Again from Proposition \ref{prop:recursion_i} we have
	\[
		\tildeE_0(n\times 2,p^s,p^j)=\varphi_n(p^s)p^{s}
		\left(\E((n-1)\times 1,p^{s-1},p^{j-1})+\tildeE_0((n-1)\times 1,p^s,p^j)\right).
	\]
	From Proposition \ref{prop:nx1} we get
	\[
	\E((n-1)\times 1,p^{s-1},p^{j-1})=\varphi_{n-1}(p^{s-1-(j-1)})=\varphi_{n-1}(p^{s-j}).
	\]
	We also have $\tildeE_0((n-1)\times 1,p^s,p^j)=0$ since an $(n-1)\times 1$ matrix with at least one element relatively prime to $p$ only has the trivial solution. Hence, we can conclude that
	\[
		\tildeE_0(n\times 2,p^s,p^j)=\varphi_n(p^s)\varphi_{n-1}(p^{s-j})p^{s}.
	\]
	One last application of Proposition \ref{prop:recursion_i} and Proposition \ref{prop:nx1} yields
	\begin{multline*}
		\tildeE_1(n\times 2,p^s,p^j)=\varphi_n(p^s)p^{s-1}\E((n-1)\times 1, p^{s-1},p^{j-1})\\
		=\varphi_n(p^s)\varphi_{n-1}(p^{s-j})p^{s-1}.
	\end{multline*}
	Hence, for $1\leq j\leq s$,
	\[
		\tildeE(n\times 2,p^s,p^j)=\varphi_n(p^s)\varphi_{n-1}(p^{s-j})(p^{s-1}+p^s).
	\]
	If $j>s$ then $\tildeE(n\times 2,p^s,p^j)=0$ since the number of solutions to such a system can be at most $p^s$.
\end{proof}

To find a formula for $\E(n\times 2, p^s,p^j)$ one possibility is to use Proposition \ref{prop:p_reduce} and induction.
We will instead use Proposition \ref{prop:gcd_reduce}.
\begin{proposition}\label{prop:nx2:gen}
	We have for $j<s$
	\[
		\E(n\times 2,p^s,p^j)=\varphi_n(p^s)\varphi_{n-1}(p^{s-j})p^{s-j}\binom{j+1}{1}_p.
	\]
\end{proposition}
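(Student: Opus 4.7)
The plan is to apply Proposition \ref{prop:gcd_reduce} with $m=2$ and $j<s$, which gives
\[
\E(n\times 2,p^s,p^j)=\sum_{k=0}^{\lfloor j/2\rfloor}\tildeE(n\times 2,p^{s-k},p^{j-2k}),
\]
since $j-s(m-1)=j-s<0$. Each $\tildeE$-term can then be evaluated using Proposition \ref{prop:nx2}. The summand for $k<j/2$ (so that $j-2k\geq 1$) falls under the middle case of Proposition \ref{prop:nx2}, while the summand for $k=j/2$ (only possible when $j$ is even) falls under the top case; this split will be the main bookkeeping annoyance.

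Next, I would factor out $\varphi_n(p^s)\varphi_{n-1}(p^{s-j})$ from every term using the two identities
\[
\varphi_n(p^{s-k})=p^{-nk}\varphi_n(p^s),\qquad \varphi_{n-1}(p^{s-k-(j-2k)})=\varphi_{n-1}(p^{s-j+k})=p^{(n-1)k}\varphi_{n-1}(p^{s-j}),
\]
which follow directly from the closed form $\varphi_r(p^t)=p^{r(t-1)}(p^r-1)$ for $t\geq 1$. A short computation shows that for $0\leq k<j/2$ the middle case of Proposition \ref{prop:nx2} contributes $\varphi_n(p^s)\varphi_{n-1}(p^{s-j})\,p^{s-2k-1}(p+1)$, while for $k=j/2$ (only when $j$ is even) the top case contributes $\varphi_n(p^s)\varphi_{n-1}(p^{s-j})\,p^{s-2k}$.

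It then remains to verify the identity
\[
(p+1)\sum_{k=0}^{\lceil j/2\rceil-1}p^{s-2k-1}+[j\text{ even}]\,p^{s-j}
=p^{s-j}(1+p+p^2+\cdots+p^j)=p^{s-j}\binom{j+1}{1}_p.
\]
This is a straightforward geometric-sum manipulation: in the odd-$j$ case, $(p+1)(1+p^2+p^4+\cdots+p^{j-1})$ telescopes to $1+p+\cdots+p^j$; in the even-$j$ case the analogous telescoping yields $p+p^2+\cdots+p^j$, and the leftover $k=j/2$ term supplies the missing $1$. Combining everything yields the claimed formula. The only real obstacle is handling the parity split cleanly, but both parities collapse to the same closed form, which is exactly what the Gaussian binomial coefficient $\binom{j+1}{1}_p$ encodes.
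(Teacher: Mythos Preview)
Your proposal is correct and follows essentially the same route as the paper: apply Proposition~\ref{prop:gcd_reduce} with $m=2$, evaluate each summand via Proposition~\ref{prop:nx2}, pull out the common factor $\varphi_n(p^s)\varphi_{n-1}(p^{s-j})$ using the scaling identities for $\varphi_r(p^t)$, and collapse the remaining geometric sum, treating the even and odd parities of $j$ separately. The paper writes out the even case explicitly and defers the odd case to ``a similar computation,'' whereas you package both parities via $\lceil j/2\rceil$ and an Iverson bracket, but the argument is the same.
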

\begin{proof}
		From Proposition \ref{prop:gcd_reduce} we have
	\[
		\E(n\times 2,p^s,p^j)=\sum_{k=\max(0,j-s)}^{\lfloor \frac{j}{2}\rfloor}\mkern-18mu\tildeE(n\times 2,p^{s-k},p^{j-2k}).
	\]
	If $j$ is even ($j=2a$) then
from Proposition \ref{prop:nx2} 
	\begin{multline*}
		\E(n\times 2,p^s,p^j)=\sum_{k=0}^{a}\tildeE(n\times 2,p^{s-k},p^{j-2k})\\
		=\sum_{k=0}^{a-1}\varphi_n(p^{s-k})\varphi_{n-1}(p^{s-j+k})(p^{s-k}+p^{s-k-1})
		+\varphi_n(p^{s-a})\varphi_{n-1}(p^{s-a})p^{s-a}\\
		=\sum_{k=0}^{a-1}\varphi_n(p^{s})\varphi_{n-1}(p^{s-j})(p^{s-2k}+p^{s-2k-1})
		+\varphi_n(p^{s})\varphi_{n-1}(p^{s-j})p^{s-2a}\\
		=\varphi_n(p^s)\varphi_{n-1}(p^{s-j})\left(p^s+p^{s-1}+\ldots+ p^{s-j}\right)\\
		=\varphi_n(p^s)\varphi_{n-1}(p^{s-j})p^{s-j}\binom{j+1}{1}_p.
	\end{multline*}
	Here we used the gaussian binomial coefficient defined in \eqref{eq:qbinom}.
	If $j$ is odd ($j=2a+1$) a similar computation gives the same result.
\end{proof}

For the coming proof by induction of the general theorem we will also need the formula for $j=0$. This result can also be found in for example \cite{Overbey2005} and \cite{Han2006}, see the remark after the theorem. 
\begin{theorem}\label{thm:unique}
	Let $p$ be a prime and let $s\in\mathbb{N}$. 
	The number of $n\times m$ matrices $A$ such that $Ax\equiv 0\pmod{p^s}$ has a unique solution is
	\[
		\E(n\times m,p^s,1)=(p^{s})^{\frac{m(m-1)}{2}}\mkern-18mu\prod_{u=n-m+1}^{n}\mkern-18mu\varphi_u(p^s).
	\]
\end{theorem}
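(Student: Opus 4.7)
The plan is to induct on $m$ using the recursion from Proposition \ref{prop:recursion_i}, with Proposition \ref{prop:nx1} providing the base case.

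First, since $j=0<m$ for $m\geq 1$, the remark following Proposition \ref{prop:p_reduce} gives $\E(n\times m,p^s,1)=\tildeE(n\times m,p^s,1)$. By Proposition \ref{prop:tildeEsum}, the sum defining $\tildeE$ collapses: only the index $i=0$ satisfies $i\leq\min(j,m-1)=0$, so $\tildeE(n\times m,p^s,1)=\tildeE_0(n\times m,p^s,1)$. Now I apply Proposition \ref{prop:recursion_i} with $i=0$, $j=0$. Assume $m\geq 2$. The factor $(p^{s-1})^i$ is $1$ and $(p^s)^{m-(i+1)}=(p^s)^{m-1}$. Since $j=0<m-1$, the $\E((n{-}1){\times}(m{-}1),p^{s-1},p^{j-(m-1)})$ term is set to zero by the convention of the proposition, and the inner sum runs only over $k=0$. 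This yields the clean one-term recursion
\begin{equation*}
\E(n\times m,p^s,1)=\varphi_n(p^s)\,(p^s)^{m-1}\,\E((n{-}1)\times(m{-}1),p^s,1).
\end{equation*}

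The base case $m=1$ is immediate from Proposition \ref{prop:nx1}: $\E((n{-}m{+}1)\times 1,p^s,1)=\varphi_{n-m+1}(p^s)$. Iterating the recursion $m-1$ times, each step decreasing both the row and column dimension by one and contributing a factor $\varphi_{n-k}(p^s)(p^s)^{m-1-k}$ for $k=0,1,\ldots,m-2$, gives
\begin{equation*}
\E(n\times m,p^s,1)=\varphi_{n-m+1}(p^s)\prod_{k=0}^{m-2}\varphi_{n-k}(p^s)\,(p^s)^{m-1-k}.
\end{equation*}
Separating the two products, the $\varphi$-factors combine to $\prod_{u=n-m+1}^{n}\varphi_u(p^s)$, while the powers of $p^s$ accumulate exponent $\sum_{k=0}^{m-2}(m-1-k)=\sum_{\ell=1}^{m-1}\ell=m(m-1)/2$. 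This reproduces the claimed formula.

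I expect no serious obstacles; the only delicate points are verifying that the two vanishing conditions in Proposition \ref{prop:recursion_i} (the $\E$-term for $j<m-1$, and the cutoff $\min(j,m-2)=0$) do the right thing at every step of the induction, and making sure the base case $m=1$ is treated separately rather than blindly applying the recursion (which would break because of the $\E$-term convention). The hypothesis $n\geq m$ is used implicitly to guarantee that every $\varphi_u(p^s)$ in the final product has $u\geq 1$, so none of the intermediate recursive calls produces an ill-defined $0\times \text{anything}$ situation.
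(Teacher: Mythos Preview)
Your proof is correct and follows essentially the same route as the paper: both argue by induction on $m$, use the base case $m=1$ from Proposition~\ref{prop:nx1}, and obtain the one-step recursion $\E(n\times m,p^s,1)=\varphi_n(p^s)(p^s)^{m-1}\E((n{-}1)\times(m{-}1),p^s,1)$. The only cosmetic difference is that the paper cites Lemma~\ref{lma:reducing2} directly (using $\E_0=\E$) while you invoke the combined Proposition~\ref{prop:recursion_i} and check that its extra terms vanish for $j=0$; the resulting recursion is identical.
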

\begin{proof}
	We will prove this by induction on $m$.
	Since we are only looking for matrices that give us unique solution we note that
	\[
		\E(n\times m,p^s,1)=\tildeE(n\times m,p^s,1)=\tildeE_0(n\times m,p^s,1).
	\]

	From Proposition \ref{lma:reducing2} we get
	\begin{equation}\label{eq:j1:reduce}
		\tildeE_0(n\times m,p^s,1)=\varphi_n(p^s)(p^s)^{m-1}\tildeE_{0}((n-1)\times (m-1),p^s,1).
	\end{equation}
	From Proposition \ref{prop:nx1} we have
	\[
		\E(u\times 1,p^s,1)=\tildeE_0(u\times 1,p^s,1)=\varphi_u(p^{s-0})=\varphi_u(p^s)
	\]
	for all positive integers $u$. This also proves the theorem for $m=1$.
	
	We now assume that
	\[
		\tildeE_{0}((n-1)\times (m-1),p^s,1)=p^{s\frac{(m-1)(m-2)}{2}}\mkern-18mu\prod_{u=n-m+1}^{n-1}\mkern-18mu\varphi_u(p^s),
	\]
	where $2\leq m\leq n$. Then from \eqref{eq:j1:reduce},
	\begin{multline*}
		\tildeE_{0}(n\times m,p^s,1)=\varphi_n(p^s)(p^s)^{m-1}p^{s\frac{(m-1)(m-2)}{2}}\mkern-18mu\prod_{u=n-m+1}^{n-1}\mkern-18mu\varphi_u(p^s)\\
		=p^{s\frac{m(m-1)}{2}}\mkern-18mu\prod_{u=n-m+1}^{n}\mkern-18mu\varphi_u(p^s).
	\end{multline*}
\end{proof}
 \begin{remark}\label{rmk:pform}
	We can easily rewrite the formula in last theorem in the following way
	\begin{equation*}
		\E(n\times m,p^s,1)=p^{(s-1)\frac{n(n+1)}{2}+s\frac{m(m-1)}{2}}\mkern-18mu\prod_{u=n-m+1}^n \mkern-18mu (p^u-1).
	\end{equation*}
	For the case $n=m$ we have
	\[
		\E(n\times n,p^s,1)=p^{s(n^2-1)}\prod_{u=1}^n(p^u-1)
	\]
	This is the how the formula is written in \cite{Overbey2005}.
\end{remark}
We are now ready to prove the main theorem of this section.

\begin{theorem}\label{thm:main_sgj}
Assume that $n\geq m$ and that $s>j$ then
\begin{multline*}
	\E(n\times m,p^s,p^j)\\
	=\varphi_n(p^s)\cdots \varphi_{n-m+2}(p^s)\varphi_{n-m+1}(p^{s-j})p^{s\frac{m(m-1)}{2}-j(m-1)}\binom{m+j-1}{j}_p.
\end{multline*}
\end{theorem}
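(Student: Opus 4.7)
The plan is to induct on $m$, mirroring the structure of the proof of the special case $m=2$ (Proposition \ref{prop:nx2:gen}) and the $j=0$ case (Theorem \ref{thm:unique}). The base case $m=1$ is immediate from Proposition \ref{prop:nx1}: the product $\varphi_n(p^s)\cdots\varphi_{n-m+2}(p^s)$ is empty, the exponent $s\frac{m(m-1)}{2}-j(m-1)$ vanishes, and $\binom{j}{j}_p=1$, so the claimed formula collapses to $\varphi_n(p^{s-j})$.

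For the inductive step, I would first use Proposition \ref{prop:gcd_reduce} to write
\[
\E(n\times m, p^s, p^j) = \sum_{k=0}^{\lfloor j/m \rfloor} \tildeE(n\times m, p^{s-k}, p^{j-mk}),
\]
the lower limit being $0$ because $s>j$. Each $\tildeE(n\times m, p^{s-k}, p^{j-mk})$ is then expanded by Proposition \ref{prop:tildeEsum} into the $\tildeE_i$'s and reduced via Proposition \ref{prop:recursion_i}, producing a combination of $\E((n-1)\times(m-1), p^{s-k-1}, \cdot)$ and $\tildeE_k((n-1)\times(m-1), p^{s-k}, \cdot)$ values. The latter can be replaced via the telescoping relation $\tildeE_i=\E_i-\E_{i+1}$ (which follows directly from Lemma \ref{lma:reducing1}), and in turn each $\E_i$ on the smaller system is expressed via the same lemma in terms of $\E$'s on $(n-1)\times(m-1)$ matrices. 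One checks that every $\E((n-1)\times(m-1), p^{s'}, p^{j'})$ appearing satisfies $s'>j'$ (since $s>j$ and $m\geq 2$), so the inductive hypothesis applies to all of them.

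The main obstacle is the final simplification. After substituting the inductive formula, the $\varphi$-factors should combine via $\varphi_u(p^{s-k})=p^{-uk}\varphi_u(p^s)$ (which holds because $s-k\geq s-\lfloor j/m \rfloor \geq 1$) into the expected product $\varphi_n(p^s)\cdots\varphi_{n-m+2}(p^s)\varphi_{n-m+1}(p^{s-j})$. What remains is a multi-index sum of Gaussian binomial coefficients times powers of $p$, which must be identified as $p^{sm(m-1)/2-j(m-1)}\binom{m+j-1}{j}_p$. For this, the identity \eqref{eq:q-bin_sum} is tailor-made to collapse the outermost sum (over $k$) coming from Proposition \ref{prop:gcd_reduce}, while the Pascal-type identity \eqref{pascal2} and possibly the $q$-Vandermonde identity \eqref{eq:q_bin_prod} should merge the inductive coefficients $\binom{m+j'-2}{j'}_p$ into the final $\binom{m+j-1}{j}_p$. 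Keeping track of the $p$-exponents through all reductions is tedious but routine; the combinatorial identification of the Gaussian binomial coefficient is the conceptual crux.
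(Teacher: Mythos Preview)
Your outline has the right global shape (induction on $m$, base case $m=1$, recursion via Proposition~\ref{prop:recursion_i}, collapse with the identity~\eqref{eq:q-bin_sum}), and the observation $\varphi_u(p^{s-k})=p^{-uk}\varphi_u(p^s)$ for $s-k\geq 1$ is exactly what makes the $\varphi$-factors line up. But there is a real gap in the inductive step.

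After expanding each $\tildeE(n\times m,p^{s-k},p^{j-mk})$ by Propositions~\ref{prop:tildeEsum} and~\ref{prop:recursion_i} you land on the quantities $\tildeE_l\bigl((n{-}1)\times(m{-}1),p^{s-k},p^{j-mk}\bigr)$. Your plan is to convert these to $\E_l-\E_{l+1}$ and then express each $\E_l$ ``via the same lemma in terms of $\E$'s''. That last step does not work: Lemma~\ref{lma:reducing1} only says
\[
\E_l=\E\bigl((n{-}1)\times(m{-}1),p^{s-k-1},p^{j-mk-(m-1)}\bigr)+\sum_{t\geq l}\tildeE_t,
\]
so it trades $\E_l$ for one genuine $\E$-value \emph{plus the very $\tildeE_t$'s you were trying to eliminate}. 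The telescoping relation $\tildeE_i=\E_i-\E_{i+1}$ and Lemma~\ref{lma:reducing1} are inverses of each other; cycling between them never reduces $\E_l$ or $\tildeE_l$ at the $(n{-}1)\times(m{-}1)$ level to plain $\E$-values on $(n{-}1)\times(m{-}1)$ matrices. Equivalently, after telescoping, Proposition~\ref{prop:recursion_i} collapses to Lemma~\ref{lma:reducing2}, giving $\tildeE_i(n\times m)=\text{const}\cdot\E_i\bigl((n{-}1)\times(m{-}1)\bigr)$, and your inductive hypothesis says nothing about $\E_i$ for $i\geq 1$.

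The paper's proof closes this gap by strengthening the induction: alongside the formula for $\E((n{-}1)\times(m{-}1),p^s,p^j)$ it simultaneously proves the explicit formula
\[
\tildeE_i\bigl((n{-}1)\times(m{-}1),p^s,p^j\bigr)=\varphi_{n-1}(p^s)\cdots\varphi_{n-m+1}(p^{s-j})\,p^{s\frac{(m-1)(m-2)}{2}-j(m-2)+(j-i)}\binom{m+j-i-3}{j-i}_p,
\]
and carries both through the induction on $m$. With this in hand, Proposition~\ref{prop:recursion_i} yields $\tildeE_i(n\times m,\cdot)$ directly, the sum over $i$ gives $\tildeE$, and a single application of Proposition~\ref{prop:p_reduce} (rather than the full Proposition~\ref{prop:gcd_reduce}) finishes. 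Only the identity~\eqref{eq:q-bin_sum} is needed for the combinatorial collapse; the $q$-Vandermonde identity plays no role here.
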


\begin{proof}
	For $m=1$ it follows from Proposition \ref{prop:nx1} that the formula is true. Is also follows from Proposition \ref{prop:nx2:gen} that the formula is true for $m=2$. Theorem \ref{thm:unique} is the case $j=0$. 
	For $j<m$ we have
	\[
		\E(n\times m,p^s,p^j)=\tildeE(n\times m,p^s,p^j).
	\]
	From Proposition \ref{prop:p_reduce} we have the recursive relation
	\[
		\E(n\times m, p^s,p^j)=\E(n\times m, p^{s-1},p^{j-m})+\tildeE(n\times m, p^s,p^j).
	\]

	 We will prove the formula for $\E$ by induction on $m$. In parallel we will also for $0\leq i\leq m-1$ show
\begin{multline*}
	\tildeE_i(n\times m,p^s,p^j)=\varphi_n(p^s)\cdots \varphi_{n-m+2}(p^s)\varphi_{n-m+1}(p^{s-j})\\
	\times p^{s\frac{m(m-1)}{2}-j(m-1)}p^{j-i}\binom{m-1+(j-i)-1}{j-i}_p
\end{multline*}

	As already said the case $m=1$ is proces in Proposition \ref{prop:nx1}. For $m=2$ we have from the proof of Proposition \ref{prop:nx2} that
	\begin{align*}
		\tildeE_0(n\times 2,p^s,p^j)&=\varphi_n(p^s)\varphi_{n-1}(p^{s-j})p^{s},\\
		\tildeE_1(n\times 2,p^s,p^j)&=\varphi_n(p^s)\varphi_{n-1}(p^{s-j})p^{s-1},
	\end{align*}
	and from Proposition \ref{prop:nx2:gen}
	\[
		\E(n\times 2,p^s,p^j)=\varphi_n(p^s)\varphi_{n-1}(p^{s-j})p^{s-j}\binom{j+1}{1}_p.
	\]
	We now assume for $0\leq i\leq m-2$,  
	\begin{multline*}
		\tildeE_i((n{-}1)\times (m{-}1),p^s,p^j)
		=\varphi_{n-1}(p^s)\cdots \varphi_{n-m+2}(p^s)\varphi_{n-m+1}(p^{s-j})\\
		\times p^{s\frac{(m-1)(m-2)}{2}-j(m-2)}p^{j-i}\binom{m-1+(j-i)-2}{j-i}_p.
	\end{multline*}
	Note that for $i=j$ we have $\binom{m+(j-i)-2}{j-i}_p=1$ and for $i>j$ we set $\binom{m+(j-i)-2}{j-i}_p=0$. We also assume
	\begin{multline*}
		\E(n\times k,p^s,p^j)
	=\varphi_n(p^s)\cdots \varphi_{n-k+2}(p^s)\varphi_{n-k+1}(p^{s-j})\\
	\times p^{s\frac{k(k-1)}{2}-j(k-1)}\binom{k+j-1}{j}_p,
	\end{multline*}
	for $k=m-1$ and any values of $n$, $s$ and $j$ satisfying the conditions $n\geq m$ and $s>j$.
	We get from Proposition \ref{prop:recursion_i}
	\begin{multline*}
		\tildeE_{m-1}(n\times m,p^s,p^j)=\varphi_n(p^s)(p^{s-1})^{m-1}\E((n{-}1){\times}(m{-}1),p^{s-1},p^{j-(m-1)})\\
		=\varphi_n(p^s)p^{s(m-1)-(m-1)}\varphi_{n-1}(p^s)\cdots\varphi_{n-m+2}(p^{s-j})p^{s\frac{(m-1)(m-2)}{2}-j(m-2)}
	\binom{j-1}{m}_p\\
	=\varphi_{n}(p^s)\cdots\varphi_{n-m+1}(p^{s-j})p^{s\frac{m(m-2)}{2}-j(m-2)-(m-1)}\binom{j-1}{m}_p.
	\end{multline*}
	Here we have used 
	\begin{multline}\label{eq:oldlemma}
		\E((n{-}1)\times (m{-}1),p^{s-1},p^{j-(m-1)})=\\
		\varphi_{n-1}(p^s)\cdots\varphi_{n-m+2}(p^{s-j})p^{s\frac{(m-1)(m-2)}{2}-j(m-2)}\binom{j-1}{m-2}_p
	\end{multline}
	that follows from the properties of the $\varphi$-functions.
	
	For $j<m-1$ we get
	\begin{multline*}
		\tildeE_i(n\times m,p^s,p^j)=\varphi_n(p^s)(p^{s-1})^i )(p^{s})^{m-1-i}
		\sum_{k=i}^j\tildeE_k((n{-}1)\times(m{-}1),p^s,p^j)\\
		=\varphi_{n}(p^s)\cdots \varphi_{n-m+2}(p^s)\varphi_{n-m+1}(p^{s-j})p^{s(m-1)-i}p^{s\frac{(m-1)(m-2)}{2}-j(m-2)}\\
		\times \left(1+p\binom{m-2}{1}_p+\ldots+p^{j-i}\binom{m-1+(j-i)-2}{j-i}_p\right)\\
		=\varphi_{n}(p^s)\cdots \varphi_{n-m+2}(p^s)\varphi_{n-m+1}(p^{s-j})p^{s\frac{m(m-1)}{2}-j(m-1)}p^{j-i}\\
		\times\binom{m-1+(j-i)-1}{j-i}_p
	\end{multline*}
	and for $j\geq m-1$ we get
	\begin{multline*}
		\tildeE_{i}(n\times m,p^s,p^j)=\varphi_n(p^s)p^{s(m-1)-i}\\
		\times \left(\E((n{-}1)\times (m{-}1),p^{s-1},p^{j-(m-1)})+\sum_{k=i}^{m-2}\tildeE((n-1)\times(m-1),p^s,p^j)\right)\\
		=\varphi_n(p^s)\cdots\varphi_{n-m+1}(p^{s-j})p^{s\frac{m(m-1)}{2}-j(m-1)-i}\binom{m-1+(j-i)-1}{j-i}_p,
	\end{multline*}
	again by using \eqref{eq:oldlemma}.
	
	Hence, no matter wish case the formulas for $\tildeE_i$ has the same form. Hence the formulas for $\tildeE_i$ are true for all $m$ and $j$.
	For $j<m$ we also get
	\begin{multline*}
		\E(n\times m,p^s,p^j)=\tildeE(n\times m,p^s,p^j)=\sum_{i=0}^{j}\tildeE_i(n\times m,p^s,p^j)\\
		=\varphi_{n}(p^s)\cdots \varphi_{n-m+2}(p^s)\varphi_{n-m+1}(p^{s-j})p^{s\frac{m(m-1)}{2}-j(m-1)}\\
		\times\left( \binom{m-2}{0}+p\binom{m-1}{1}+\ldots p^{j}\binom{m-1+j-1}{j}\right)\\
		=\varphi_{n}(p^s)\cdots \varphi_{n-m+2}(p^s)\varphi_{n-m+1}(p^{s-j})p^{s\frac{m(m-1)}{2}-j(m-1)}\binom{m+j-1}{j}_p.
	\end{multline*}
	So we have proved the formula for $\E$ for all $m$ and $j$ as long as  $j<m$.
	For $j\geq m$ we need to be more careful since
	\begin{equation}\label{eq:main:n1}
		\E(n\times m,p^s,p^j)=\E(n\times m,p^{s-1},p^{j-m})+\tildeE(n\times m,p^s,p^j).
	\end{equation}
	But, a second inductive reasoning solves the problem: 
	We know the formula for $\E$ is true for $0\leq j<m$. We can now use this result to get a well defined expression of
	$\E(n\times m,p^s,p^j)$  for $m\leq j< 2m$ by using \eqref{eq:main:n1} since $\tildeE$ is well defined. Then we repeat the process and get att well defined expression of $\E(n\times m,p^s,p^j)$ for $2m\leq j<3m$ and so on. 
	If we again use the properties of the $\varphi$-function (compare \eqref{eq:oldlemma}) we get
	\begin{multline*}
		\E(n\times m,p^{s-1},p^{j-m})=
		\varphi_{n}(p^{s-1})\cdots \varphi_{n-m+2}(p^{s-1})\varphi_{n-m+1}(p^{s-1-(j-m)})\\
		\times p^{(s-1)\frac{m(m-1)}{2}-(j-m)(m-1)}\binom{m+(j-m)-1}{j-m}\\
	=\varphi_{n}(p^s)\cdots\varphi_{n-m+2}(p^{s})\varphi_{n-m+1}(p^{s-j})p^{s\frac{m(m-1)}{2}-j(m-1)}\binom{j-1}{m-1}.
	\end{multline*}
	
	By using \eqref{eq:main:n1} we finally get
	
	\begin{multline*}
		\E(n\times m,p^s,p^j)\\
		=\varphi_n(p^s)\ldots \varphi_{n-m+2}(p^s)\varphi_{n-m+1}(p^{s-j})
		p^{s\frac{m(m-1)}{2}-j(m-1)}\binom{j-1}{m-1}_p\\
		+\sum_{i=0}^{j}\varphi_n(p^s)\cdots\varphi_{n-m+1}(p^{s-j})p^{s\frac{m(m-1)}{2}-j(m-1)-i}\binom{m-1+(j-i)-1}{j-i}_p\\
		=\varphi_{n}(p^s)\cdots \varphi_{n-m+2}(p^s)\varphi_{n-m+1}(p^{s-j})p^{s\frac{m(m-1)}{2}-j(m-1)}\binom{m+j-1}{j}_p.
	\end{multline*}	
\end{proof}
%%%%%%%%%%%%%%%%%%%%%%%%%%%%%%%%%%%%
%%%%%%%%%%%%%%%%%%%%%%%%%%%%%%%%%%%%
%%%%%%%%%%%%%%%%%%%%%%%%%%%%%%%%%%%%
\section{Explicit formula for $j=s$}\label{sec5}
%%%%%%%%%%%%%%%%%%%%%%%%%%%%%%%%%%%%
%%%%%%%%%%%%%%%%%%%%%%%%%%%%%%%%%%%%
%%%%%%%%%%%%%%%%%%%%%%%%%%%%%%%%%%%%
In this section we will prove an explicit formula for $\E(n\times m, p^s,p^j)$, when $j=s$. It is clear that $\E(n\times 1,p^s,p^s)=1$ since the only possible matrix is the zero matrix.
But, already for the $n\times 2$-case things start to be more complicated compared to the $j<s$ case. The formula can be stated in many different forms, but we will use the form that will be used in the general theorem later in this section.
\begin{proposition}\label{prop:nx2:sej}
	Consider $n\times 2$-matrices modulo $p^s$, where $n\geq 2$. The number of such matrices that have $p^s$ homogeneous solutions is
	\[
		\E(n\times 2,p^s,p^s)=\varphi_n(p^s)\varphi_{n-1}(1)p^{s-1}\binom{2}{1}_p
		+\varphi_{n}(p^{s-1})\varphi_{n-1}(p)p\binom{s-1}{1}_p.
	\]
\end{proposition}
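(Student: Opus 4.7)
My plan is to apply Proposition \ref{prop:gcd_reduce} with $m=2$ and $j=s$, which writes
$$\E(n\times 2,p^s,p^s) = \sum_{k=0}^{\lfloor s/2\rfloor} \tildeE(n\times 2, p^{s-k}, p^{s-2k}),$$
and then evaluate each $\tildeE$-summand using Proposition \ref{prop:nx2}. The strategy is to show that the $k=0$ term accounts for the first summand of the target formula and the collection of all $k\geq 1$ terms accounts for the second.

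First I would isolate the $k=0$ term. Since $s\geq 1$, the pair $(s',j')=(s,s)$ falls in the middle branch of Proposition \ref{prop:nx2} and yields $\varphi_n(p^s)\varphi_{n-1}(1)(p^s+p^{s-1})$. Using $\varphi_{n-1}(1)=1$ and $p^s+p^{s-1}=p^{s-1}(p+1)=p^{s-1}\binom{2}{1}_p$ recovers the first summand exactly.

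For $k\geq 1$ the calculation splits according to whether $s-2k\geq 1$ (middle branch of Proposition \ref{prop:nx2}) or $s-2k=0$ (the unique-solution branch, which only occurs at the endpoint $k=s/2$ when $s$ is even). Expanding $\varphi_n(p^{s-k})$ and $\varphi_{n-1}(p^k)$ via the explicit formula $\varphi_u(p^t)=p^{u(t-1)}(p^u-1)$, the middle-branch terms collapse to $(p^n-1)(p^{n-1}-1)(p+1)p^{(n+1)s-2n-2k}$, and the even-case endpoint term evaluates to $(p^n-1)(p^{n-1}-1)p^{ns-2n+1}$. I would then factor out $(p^n-1)(p^{n-1}-1)p^{ns-2n}$ from every remaining term and verify, by a short calculation in both parities of $s$, that the bracketed sum equals $p+p^2+\cdots+p^{s-1}=p\binom{s-1}{1}_p$. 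This recovers the second summand, since $(p^n-1)(p^{n-1}-1)p^{n(s-2)+1}$ coincides with $\varphi_n(p^{s-1})\varphi_{n-1}(p)\,p$ when $s\geq 2$; the edge case $s=1$ is trivial because $\binom{0}{1}_p=0$ and the sum over $k\geq 1$ is empty.

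The main obstacle is the bookkeeping around the even-$s$ endpoint. At $k=s/2$ the relevant $\tildeE$ falls under the $j=0$ branch of Proposition \ref{prop:nx2} and contributes a monomial weighted by $p$ rather than $p+1$, which on its own would appear to break the uniform geometric pattern of the other terms. What redeems the computation is that this one-off monomial fills in precisely the parity slot missing from the $(p+1)$-weighted middle-branch contributions, so that in both odd and even $s$ the bracket telescopes cleanly into the same closed form $p\,\binom{s-1}{1}_p$.
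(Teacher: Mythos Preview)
Your argument is correct. The $k=0$ term and the parity-split summation for $k\geq 1$ both check out, and the identification $(p^n-1)(p^{n-1}-1)p^{ns-2n+1}=\varphi_n(p^{s-1})\varphi_{n-1}(p)\,p$ together with the $s=1$ edge case close the proof.

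The route differs from the paper's in one respect. The paper applies Proposition~\ref{prop:p_reduce} just once, writing $\E(n\times 2,p^s,p^s)=\tildeE(n\times 2,p^s,p^s)+\E(n\times 2,p^{s-1},p^{s-2})$, and then invokes the already-established closed form of Proposition~\ref{prop:nx2:gen} for the second summand. You instead go through Proposition~\ref{prop:gcd_reduce}, which unrolls that recursion all the way down, and then sum the resulting $\tildeE$-values by hand using only Proposition~\ref{prop:nx2}. In effect you are re-deriving the content of Proposition~\ref{prop:nx2:gen} inside this proof rather than citing it. The paper's version is shorter and more modular; yours is more self-contained but duplicates work already done, including the even/odd split that the paper's proof avoids entirely.
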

\begin{proof}
	From Proposition \ref{prop:p_reduce} we get
	\[
		\E(n\times 2,p^s,p^s)=\E(n\times 2,p^{s-1},p^{s-2})+\tildeE(n\times 2,p^s,p^s).
	\]
	From Proposition \ref{prop:nx2:gen} we get
	\[
		\E(n\times 2,p^{s-1},p^{s-2})=\varphi_n(p^{s-1})\varphi_{n-1}(p)p\binom{s-1}{1}_p
	\]
	and finally by Proposition \ref{prop:nx2}
	\[
		\tildeE(n\times 2,p^s,p^s)=\varphi_n(p^s)(p^s+p^{s-1})=\varphi_n(p^s)p^{s-1}\binom{2}{1}_p.\qedhere
	\]
\end{proof}

We now consider the general $n\times m$ case where $n\geq m$.

\begin{theorem}\label{thm:main_slj}
Let $n\geq m$.
The number of $n\times m$-matrices that have $p^s$ homogeneous solutions modulo $p^s$ is
	\begin{equation}\label{eq1:thm:main_slj}
		\E(n\times m,p^s,p^s)=\sum_{k=0}^{m-1}\Phi(p^s,k)\binom{m}{k}_p\binom{s-1}{m-1-k}_p p^{a(m,k)s+b(m,k)},
	\end{equation}
	where
	\[
		\Phi(p^s,k)=\varphi_{n-(m-1)}(p^{m-1-k})\prod_{u=k}^{m-2}\varphi_{n-u}(p^{s-1})\prod_{v=0}^{k-1}\varphi_{n-v}(p^s)
	\]
	and
	\[
		a(m,k)=(\frac{m(m-1)}{2}-(m-1-k))\textrm{, }b(m,k)=\frac{(m-k)(m-3k-1)}{2}.
	\]
\end{theorem}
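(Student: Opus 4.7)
The plan is to induct on $m$, closely mirroring the strategy used in Theorem~\ref{thm:main_sgj}. The base cases $m=1$ (where the only admissible matrix is the zero matrix, so $\E(n\times 1,p^s,p^s)=1$) and $m=2$ (Proposition~\ref{prop:nx2:sej}) are already in hand, and one checks directly that they match the claimed formula.

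For the inductive step, I would first apply Proposition~\ref{prop:p_reduce}:
\[
\E(n\times m,p^s,p^s) \;=\; \E(n\times m,p^{s-1},p^{s-m}) \;+\; \tildeE(n\times m,p^s,p^s).
\]
When $m\geq 2$ and $s\geq m$, the first summand has $j=s-m<s-1$, so it falls under the regime of Theorem~\ref{thm:main_sgj}; when $s<m$ it is vacuous, which is consistent with the claim, since $\binom{s-1}{m-1-k}_p=0$ unless $k\geq m-s$, automatically killing the offending terms. For the second summand, I would expand
\[
\tildeE(n\times m,p^s,p^s) \;=\; \sum_{i=0}^{m-1}\tildeE_i(n\times m,p^s,p^s)
\]
via Proposition~\ref{prop:tildeEsum}, and then use Proposition~\ref{prop:recursion_i} to rewrite each $\tildeE_i$ in terms of $\E((n-1)\times(m-1),p^{s-1},p^{s-m})$ (again covered by Theorem~\ref{thm:main_sgj}) together with the quantities $\tildeE_k((n-1)\times(m-1),p^s,p^s)$ for $k\geq i$, which are supplied by the inductive hypothesis at $m-1$.

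To make the bookkeeping tractable I expect it will be necessary to prove in parallel a refined companion identity for $\tildeE_i(n\times m,p^s,p^s)$ that isolates the $i$-dependence, in the same spirit as what was done inside the proof of Theorem~\ref{thm:main_sgj}. The main obstacle will then be purely combinatorial: after substituting the inductive expressions and the contribution from $\E(n\times m,p^{s-1},p^{s-m})$, one obtains a double sum of products of Gaussian binomials and powers of $p$ that has to collapse to the single product $\binom{m}{k}_p\binom{s-1}{m-1-k}_p\,p^{a(m,k)s+b(m,k)}$. I expect this to follow from repeated use of the Pascal-type identities~\eqref{pascal} and~\eqref{pascal2}, the partial-sum formula~\eqref{eq:q-bin_sum}, and possibly the $q$-Vandermonde identity~\eqref{eq:q_bin_prod}. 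A parallel difficulty will be tracking the $\varphi$-arguments so that they emerge in the three-block pattern defining $\Phi(p^s,k)$: an initial stretch $\varphi_{n-v}(p^s)$ for $0\leq v\leq k-1$ coming from the leading columns carrying an invertible entry and the factor $\varphi_n(p^s)$ produced at each level of Proposition~\ref{prop:recursion_i}; a middle stretch $\varphi_{n-u}(p^{s-1})$ for $k\leq u\leq m-2$ introduced whenever the recursion triggers a reduction from $p^s$ to $p^{s-1}$; and a terminal factor $\varphi_{n-(m-1)}(p^{m-1-k})$ arising from the bottom of the recursion through the $1$-column formula of Proposition~\ref{prop:nx1}.
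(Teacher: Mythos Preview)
Your proposal is correct and follows essentially the same route as the paper: split off $\E(n\times m,p^{s-1},p^{s-m})$ via Proposition~\ref{prop:p_reduce} and identify it (through Theorem~\ref{thm:main_sgj}) with the $k=0$ summand, then prove by induction on $m$ a refined closed form for each $\tildeE_{m-k}(n\times m,p^s,p^s)$ using Proposition~\ref{prop:recursion_i}, and finally collapse the resulting sums with the partial-sum identity~\eqref{eq:q-bin_sum}. The only minor difference is that the paper never needs the $q$-Vandermonde identity~\eqref{eq:q_bin_prod} here; \eqref{eq:q-bin_sum} alone suffices both in the induction step and in summing the $\tildeE_{m-k}$.
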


\begin{proof}
	From Proposition \ref{prop:p_reduce} we know
	\[
		\E(n\times m,p^s,p^s)=\tildeE(n\times m,p^s,p^s)+\E(n\times m, p^{s-1},p^{s-m})
	\]
	and from Theorem \ref{thm:main_sgj} we get
	\begin{multline*}
		\E(n\times m, p^{s-1},p^{s-m})\\
		=\varphi_n(p^{s-1})\cdots \varphi_{n-m+2}(p^{s-1})\varphi_{n-m+1}(p^{m-1})
		p^{s\frac{(m-1)(m-2)}{2}+\frac{m(m-1)}{2}}\binom{s-1}{m-1}_p\\
		=\Phi(p^s,0)p^{s a(m,0)+b(m,0)}\binom{m}{0}_p\binom{s-1}{m-1-0}.
	\end{multline*}
	Hence, $\E(n\times m, p^{s-1},p^{s-m})$ is the first term (for $k=0$) in the  sum of \eqref{eq1:thm:main_slj}. So we have left to prove that
	\begin{equation}\label{eq2:thm:main_slj}
		\tildeE(n\times m,p^s,p^s)=\sum_{k=1}^{m-1}\Phi(p^s,k)\binom{m}{k}_p\binom{s-1}{m-1-k}p^{a(m,k)s+b(m,k)}.
	\end{equation}
	
%Proof by induction

We will prove this by induction on $m$. In fact we will prove that
\begin{multline}\label{eq3:thm:main_slj}
	\tildeE_{m-k}(n\times m, p^s,p^s)\\
	=\sum_{i=1}^{\min(k,m-1)}\Phi_m(p^s,i) p^{s a(m,i)+b(m,i)+(k-i)}
	\binom{s-1}{m-(i+1)}_p\binom{k-1}{i-1}_p
\end{multline}
for $1\leq k\leq m$.

By the formula for $\tildeE_{m-k}$ in \eqref{eq3:thm:main_slj} we can find the desired formula for $\tildeE$ 
in \eqref{eq2:thm:main_slj}. 
In fact, 
\begin{multline*}
	\tildeE(n\times m,p^s,p^s)=\sum_{k=1}^{m}\tildeE_{m-k}(n\times m,p^s,p^s)\\
	=\sum_{k=1}^{m-1} \sum_{i=1}^k\Phi_m(p^s,i) p^{s a(m,i)+b(m,i)+(k-i)}
	\binom{s-1}{m-(i+1)}_p\binom{k-1}{i-1}_p\\
	+\sum_{i=1}^{m-1}\Phi_m(p^s,i) p^{s a(m,i)+b(m,i)+(m-i)}
	\binom{s-1}{m-(i+1)}_p\binom{m-1}{i-1}_p\\
	=\sum_{i=1}^{m-1}\sum_{k=i}^{m}\Phi_m(p^s,i) p^{s a(m,i)+b(m,i)}\binom{s-1}{m-(i+1)}_p p^{k-i}\binom{k-1}{i-1}_p\\
	=\sum_{i=1}^{m-1}\Phi_m(p^s,i) p^{s a(m,i)+b(m,i)}\binom{s-1}{m-(i+1)}_p
	\sum_{k=i}^{m}p^{k-i}\binom{k-1}{i-1}_p.
\end{multline*}
By the properties of the Gaussian coefficients, see \eqref{eq:q-bin_sum} in Section \ref{sec2}, we get
\begin{multline*}
	\sum_{k=i}^{m}p^{k-i}\binom{k-1}{i-1}_p=\sum_{d=0}^{m-i}p^d\binom{d+i-1}{i-1}_p
	\\
	=\sum_{d=0}^{m-i}p^d\binom{d+i-1}{d}_p=\binom{i-1+(m-i)+1}{m-i}_p=\binom{m}{i}_p.
\end{multline*}
Hence,
\[
\tildeE(n\times m,p^s,p^s)=\sum_{i=1}^{m-1}\Phi_m(p^s,i) p^{s a(m,i)+b(m,i)}\binom{s-1}{m-(i+1)}_p\binom{m}{i}_p.
\]
and the theorem will follow.

We now turn to the induction proof of \eqref{eq3:thm:main_slj}
%Base case
For $m=2$ we have (see Proposition \ref{prop:nx2})
\begin{align*}
	\tildeE_{1}(n\times 2,p^s,p^s)=\varphi_n(p^s)p^{s-1}\\
	\tildeE_0(n\times 2,p^s,p^s)=\varphi_n(p^s)p^{s}
\end{align*}
and this corresponds to formulas in \eqref{eq3:thm:main_slj} for $m=2$, $k=1,2$.

Let our induction hypothesis be
\begin{multline*}
	\tildeE_{m-1-k}((n-1)\times (m-1),p^s,p^s)\\
	=\sum_{i=1}^k \Phi_{n-1,m-1}(p^s,i)p^{sa(m-1,i)+b(m-1,i)+k-i}
	\binom{s-1}{(m-1)-(i+1)}_p\binom{k-1}{i-1}_p
\end{multline*}
for $1\leq k\leq m-2$ and for $k=m-1$
\begin{multline*}
\tildeE_0((n-1)\times (m-1),p^s,p^s)\\
=\sum_{i=1}^{m-2} \Phi_{n-1,m-1}(p^s,i)p^{sa(m-1,i)+b(m-1,i)+m-1-i}
	\binom{s-1}{(m-1)-(i+1)}_p\binom{m-2}{i-1}_p.
\end{multline*}

From Proposition \ref{prop:recursion_i} and Theorem \ref{thm:main_sgj} we have
\begin{multline*}
	\tildeE_{m-1}(n\times m,p^s,p^s)=\varphi_n(p^s)(p^{s-1})^{m-1}\E((n-1)\times (m-1),p^{s-1},p^{s-(m-1)})\\
	=\varphi_n(p^s)(p^{s-1})^{m-1}\varphi_{n-1}(p^{s-1})\cdots \varphi_{n-m+2}(p^{s-1})\varphi_{n-m+1}(p^{m-2})\\
	\times p^{(s-1)\frac{(m-1)(m-2)}{2}-(s-(m-1))(m-2)}\binom{s-1}{s-(m-1)}_p\\
	=\Phi_{n,m}(p^s,1)p^{sa(m,1)+b(m,1)}\binom{s-1}{m-2}_p.
\end{multline*}
Again by Proposition \ref{prop:recursion_i} and the induction hypothesis we get for $2 \leq k\leq m-1$
\begin{multline*}
	\tildeE_{m-k}(n\times m,p^s,p^s)=\varphi_n(p^s) (p^{s-1})^{m-k}(p^s)^{k-1}\\
	\times \left(\E((n-1)\times (m-1),p^{s-1},p^{s-(m-1)})+\sum_{t=1}^{k-1}\tildeE_{m-1-t}((n-1)\times (m-1),p^s,p^s)\right)\\
	=\varphi_n(p^s) (p^{s-1})^{m-k}(p^s)^{k-1}\left(\Phi_{n-1,m-1}(p^s,0)p^{s a(m-1,0)+b(m-1,0)}\right.\\	
\left. +\sum_{t=1}^{k-1}\sum_{i=1}^t \Phi_{n-1,m-1}(p^s,i)p^{sa(m-1,i)+b(m-1,i)+(t-i)}
\binom{s-1}{m-1-(i+1)}_p \binom{t-1}{i-1}_p \right)\\
=\Phi_{n,m}(p^s,1)p^{s a(m,1)+b(m,1)+k-1}\binom{s-1}{m-2}_p\\
+\sum_{t=1}^{k-1}\sum_{i=1}^t 
\Phi_{n,m}(p^s,i+1)p^{s a(m,i+1)+b(m,i+1)+k-(i+1) +t-i}\binom{s-1}{m-1-(i+1)}_p \binom{t-1}{i-1}_p\\
=\Phi_{n,m}(p^s,1)p^{s a(m,1)+b(m,1)+k-1}\binom{s-1}{m-2}_p\\
+\sum_{i=1}^{k-1} \Phi_{n,m}(p^s,i+1)p^{s a(m,i+1)+b(m,i+1)+k-(i+1)}\binom{s-1}{m-1-(i+1)}_p
\sum_{t=i}^{k-1}p^{t-i}\binom{t-1}{i-1}_p.
\end{multline*}	
From \eqref{eq:q-bin_sum} in Section \ref{sec2} we now get
\[
	\sum_{t=i}^{k-1}p^{t-i}\binom{t-1}{i-1}_p=\binom{k-1}{i}_p
\]
and by letting $j=i+1$
\begin{multline*}
	\tildeE_{m-k}(n\times m,p^s,p^s)
	=\Phi_{n,m}(p^s,1)p^{s a(m,1)+b(m,1)+k-1}\binom{s-1}{m-2}_p+\\
	\sum_{j=2}^{k} \Phi_{n,m}(p^s,j)p^{s a(m,j)+b(m,j)+k-j}\binom{s-1}{m-1-j}_p\binom{s-1}{j-1}_p\\
	=\sum_{j=1}^{k} \Phi_{n,m}(p^s,j)p^{s a(m,j)+b(m,j)+k-j}\binom{s-1}{m-1-j}_p\binom{s-1}{j-1}_p.
\end{multline*}
A similar computation shows that
\[
	\tildeE_{0}(n\times m,p^s,p^s)=\sum_{j=1}^{m-1} \Phi_{n,m}(p^s,j)p^{s a(m,j)+b(m,j)+m-j}\binom{s-1}{m-1-j}_p\binom{s-1}{j-1}_p.
\]	
We have proved \eqref{eq2:thm:main_slj} by induction and the proof is complete.
\end{proof}
%%%%%%%%%%%%%%%%%%%%%%%%%%%%%%%%%%%%%
\section{Applications to counting the number of solutions}\label{sec6}
%%%%%%%%%%%%%%%%%%%%%%%%%%%%%%%%%%%%%
In this section we let $m=n$ and consider the quadratic system $A x\equiv 0\pmod{p^s}$.
Let $\eta(A,p^s)$ denote the number of solutions. In \cite{NilssonNyqvist2009} a formula for $\eta(A,p^s)$ is derived by using Gaussian elimination and reductions, in short the formula is
\[
	\eta(A,p^s)=\gcd(\det(A), p^{s+e}),
\]
for some non-negative integer $e$.

We can easily see that the formula $\eta(A,p^s)=\gcd(\det(A),p^s)$ is true if and only if $\eta(A,p^s)=p^j$ for some $j\leq s$. Hence, the probability that the number of homogeneous solutions to the random $n\times n$-matrix is given by 
$\gcd(\det(A),p^s)$ is
\[
	\P(\eta\leq p^s)=\frac{1}{p^{s n^2}}\sum_{j=0}^{s}\E(n\times n, p^s,p^j).
\]

We will prove in Theorem \ref{thm:main_sum} below that 
\begin{equation}\label{eq:main}
	\sum_{j=0}^{s}\E(n\times n, p^s,p^j)=p^{sn^2}(1-p^{-s-3}+\mathcal{O}(p^{-s-4})),
\end{equation}
where $\mathcal{O}$ is the usual big-Oh notation.
Hence, the probability that the number of solutions to $Ax\equiv 0\pmod{p^s}$ is less or equal to $p^s$ is
\[
	\P(\eta\leq p^s)=1-p^{-s-3}+\mathcal{O}(p^{-s-4}). 
\]

We will divide the proof of \eqref{eq:main} into two main parts. First we will estimate 
$\sum_{j=0}^{s-1}\E(n\times n, p^s,p^j)$ in Proposition \ref{prop:nxn_sgj} and then we estimate $\E(n\times n, p^s,p^s)$ in Proposition \ref{prop:nxn_slj}. We start with some notions that will be used throughout the section.
\begin{definition}
	We set for $1\leq k\leq n$, 
	\[
		S_{n,k}=\prod_{u=k}^n (p^u-1)
	\]
	and                                                                                            
\[
	A_{n,k}=n+(n+1)+\ldots + k=\frac{(n-k+1)(n+k)}{2}.
\]
We also set $A_n=A_{n,1}$ and $S_n=S_{n,1}$.
\end{definition}

We will need estimates of the products of the generalized Euler functions that appear in Theorem \ref{thm:main_sgj} and Theorem \ref{thm:main_slj}.
\begin{lemma}\label{lma:nxn_sgj}
		We have for $j< s$
		\[
			\varphi_1(p^{s-j})\prod_{u=2}^n \varphi_u(p^s)=p^{(s-1)A_n - j}S_{n,1}.
		\]
\end{lemma}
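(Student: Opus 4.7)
The plan is a direct computation: substitute the closed form $\varphi_u(p^s) = p^{u(s-1)}(p^u-1)$ into each factor, collect powers of $p$, and recognize the remaining product as $S_{n,1}$. There is no combinatorial obstacle; the only thing to keep track of is the hypothesis $j<s$, which guarantees $s-j \geq 1$ so that $\varphi_1(p^{s-j})$ is governed by the standard formula rather than the degenerate value $\varphi_1(1)=1$.

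Concretely, I would proceed as follows. First, for each $2\leq u\leq n$ write $\varphi_u(p^s) = p^{u(s-1)}(p^u-1)$ and factor the prime powers out of the product, so that
\[
\prod_{u=2}^n \varphi_u(p^s) = p^{(s-1)\sum_{u=2}^n u}\prod_{u=2}^n (p^u-1) = p^{(s-1)(A_n-1)}\prod_{u=2}^n (p^u-1),
\]
using $\sum_{u=2}^n u = A_n - 1$ from the definition of $A_n$. Second, apply the hypothesis $j<s$ (so $s-j\geq 1$) to write $\varphi_1(p^{s-j}) = p^{s-j-1}(p-1)$.

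Third, multiply the two pieces. The prime-power exponents add to
\[
(s-1)(A_n - 1) + (s-j-1) = (s-1)A_n - (s-1) + s - j - 1 = (s-1)A_n - j,
\]
while the factor $(p-1)$ combines with $\prod_{u=2}^n (p^u-1)$ to produce $\prod_{u=1}^n (p^u-1) = S_{n,1}$. This yields the claimed identity $\varphi_1(p^{s-j})\prod_{u=2}^n \varphi_u(p^s) = p^{(s-1)A_n - j}S_{n,1}$.

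The hardest part is essentially bookkeeping, and there is no real obstacle; the lemma is a packaging statement that isolates a recurring product appearing in the forthcoming estimates of Proposition \ref{prop:nxn_sgj}, so the main merit of the proof is just a clean arithmetic verification of the exponent $(s-1)A_n - j$.
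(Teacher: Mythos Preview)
Your proof is correct and follows essentially the same approach as the paper: both substitute the closed form $\varphi_u(p^s)=p^{u(s-1)}(p^u-1)$, factor out the powers of $p$, and absorb the remaining $(p-1)\prod_{u=2}^n(p^u-1)$ into $S_{n,1}$. Your exponent bookkeeping is slightly more explicit than the paper's, but the argument is the same.
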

\begin{proof}
From the definition of $\varphi$ we get
	\begin{multline*}
		\varphi_1(p^{s-j})\prod_{u=2}^n \varphi_u(p^s)
			=p^{s-j-1}(p-1)\prod_{u=2}^n (p^{su}-p^{(s-1)u})\\
			=p^{s-j-1}\prod_{u=2}^n p^{(s-1)u}\prod_{u=1}^n (p^u-1)
			=p^{(s-1)A_{n}-j}S_{n,1}.
	\end{multline*}
\end{proof}

Recall that $\Phi(p^s,k)$ in Theorem \ref{thm:main_slj} is defined as
\[
	\Phi(p^s,k)=\varphi_{n-(m-1)}(p^{m-1-k})\prod_{u=k}^{m-2}\varphi_{n-u}(p^{s-1})\prod_{v=0}^{k-1}\varphi_{n-v}(p^s).
\]
For $n=m$ we get
\begin{equation}\label{eq:Phi_nlm}
	\Phi(p^s,k)=\varphi_{1}(p^{n-1-k})\prod_{u=k}^{n-2}\varphi_{n-u}(p^{s-1})\prod_{v=0}^{k-1}\varphi_{n-v}(p^s).
\end{equation}

\begin{lemma}\label{lma:nxn_slj}
	We have
	\[
		\Phi_{n}(p^s,k)=
		\begin{cases}
			p^{s A_{n,2}-2 A_{n,1}}S_{n,1}, \textrm{ if } k=0\\
			p^{s A_{n-2}-A_n - A_{n-k-1}}S_{n,1},\textrm{ if } 1\leq k\leq n-2\\
			p^{(s-1)A_{n,2}} S_{n,2},\textrm{ if } k=n-1.
		\end{cases}
	\]
\end{lemma}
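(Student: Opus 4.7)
The plan is to prove the lemma by direct evaluation of the product formula \eqref{eq:Phi_nlm}, using the closed form $\varphi_u(p^t)=p^{u(t-1)}(p^u-1)$ for $t\geq 1$ (and $\varphi_u(1)=1$ for $t=0$). I would split $\Phi_n(p^s,k)$ into its three constituent factors $\varphi_1(p^{n-1-k})$, $\prod_{u=k}^{n-2}\varphi_{n-u}(p^{s-1})$, and $\prod_{v=0}^{k-1}\varphi_{n-v}(p^s)$, then reindex the products so their ranges become intervals of consecutive integers: setting $t=n-u$ in the middle product yields $\prod_{t=2}^{n-k}\varphi_t(p^{s-1})$, and setting $w=n-v$ in the last product yields $\prod_{w=n-k+1}^{n}\varphi_w(p^s)$. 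Under this reindexing each product evaluates to a single power of $p$ times a segment of the telescoping quantity $S_{n,\cdot}$: specifically $p^{(s-2)A_{n-k,2}}S_{n-k,2}$ and $p^{(s-1)A_{n,n-k+1}}S_{n,n-k+1}$ respectively.

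The three cases in the statement correspond to the three boundary behaviors of this product: for $k=n-1$ the first two factors degenerate ($\varphi_1(p^0)=1$ and an empty middle product), leaving just $\prod_{w=2}^n\varphi_w(p^s)=p^{(s-1)A_{n,2}}S_{n,2}$, which is the stated value. For $k=0$ the last product is empty; the $(p-1)$ from $\varphi_1(p^{n-1})=p^{n-2}(p-1)$ combines with $S_{n,2}$ to produce $S_{n,1}$, and the remaining powers of $p$ are regrouped using $A_{n,2}=A_{n,1}-1$. For $1\le k\le n-2$ all three factors contribute; here the key telescoping identity is
\[
    S_{n-k,2}\cdot S_{n,n-k+1}=\prod_{u=2}^{n-k}(p^u-1)\prod_{u=n-k+1}^{n}(p^u-1)=S_{n,2},
\]
so the accumulated $(p-1)$ factor from $\varphi_1(p^{n-1-k})$ promotes $S_{n,2}$ to $S_{n,1}$, matching the statement.

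What remains is the bookkeeping for the exponent of $p$. I would collect the three contributions $(n-2-k)+(s-2)A_{n-k,2}+(s-1)A_{n,n-k+1}$ and use $A_{n-k,2}=A_{n-k}-1$ together with $A_{n,n-k+1}=A_n-A_{n-k}$ to rewrite this as a combination of $A_{n-2}$, $A_n$, and $A_{n-k-1}$. This is the main obstacle: the exponents must be manipulated carefully to land exactly on $sA_{n-2}-A_n-A_{n-k-1}$, which I expect to follow from the identity $A_n-A_{n-k}=A_{n-1,n-k}+(\text{boundary term})$ and the analogous shift between $A_{n-k}$ and $A_{n-k-1}$. With those substitutions the three boundary cases of the lemma fall out simultaneously as specializations of a single formula.
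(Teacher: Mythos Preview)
Your proposal is correct and follows essentially the same route as the paper: direct evaluation of \eqref{eq:Phi_nlm} via $\varphi_u(p^t)=p^{u(t-1)}(p^u-1)$, the same reindexings $a=n-u$ and $b=n-v$ to obtain $\prod_{a=2}^{n-k}\varphi_a(p^{s-1})$ and $\prod_{b=n-k+1}^{n}\varphi_b(p^s)$, and the same telescoping $(p-1)S_{n-k,2}S_{n,n-k+1}=S_{n,1}$. The only cosmetic difference is that you treat the three cases as specializations of one computation, whereas the paper writes out $k=0$, $1\le k\le n-2$, and $k=n-1$ separately; the exponent bookkeeping you flag as ``the main obstacle'' is exactly the string of $A_{n,k}$ identities the paper carries out line by line.
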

\begin{proof}
For $k=0$ we get from \eqref{eq:Phi_nlm} that
	\begin{multline*}
		\Phi_{n}(p^s,0)=\varphi_1(p^{n-1})\prod_{a=2}^{n} \varphi_a(p^{s-1})\\
		=p^{n-2}(p-1)\prod_{a=2}^n (p^{(s-1)a}-p^{(s-2)a})
		=p^{n-2}\prod_{a=2}^n p^{(s-2)a} S_{n,1}\\
		=p^{s A_{n,2}-2 A_{n,1}}S_{n,1}.
	\end{multline*}
	For $1\leq k\leq n-2$,
	\begin{multline*}
		\Phi(p^s,k)=\varphi_1(p^{n-1-k})\prod_{a=2}^{n-k}\varphi_a(p^{s-1})\prod_{b=n-k+1}^n \varphi_b(p^s)\\
		=p^{n-k-2}(p-1)\prod_{a=2}^{n-k}(p^{(s-1)a}-p^{(s-2)a})\prod_{b=n-k+1}^n (p^{sb}-p^{(s-1)b})\\  
		=p^{n-k-2}p^{(s-2)A_{n-k,2}}p^{(s-1)A_{n,n-k+1}}S_{n,1}\\
		=p^{s A_{n,2}-2 A_{n-k,1}-A_{n,n-k}+n-k} S_{n,1}
		=p^{s A_{n-2}-A_n - A_{n-k-1}}S_{n,1}.
	\end{multline*}
	Finally, for $k=n-1$,
	\begin{multline*}
		\Phi_{n}(p^s,n-1)=\prod_{b=2}^n\varphi_b(p^s)=\prod_{b=2}^n (p^{sb}-p^{(s-1)b})\\
		=p^{(s-1)A_{n,2}}\prod_{b=2}^n(p^b-1)=p^{(s-1)A_{n,2}} S_{n,2}.
	\end{multline*}
\end{proof}
We will also need an expansion of $S_{n,k}$. 
\begin{lemma}\label{nxn:Snk}
	Let $n\geq k \geq 1$ be integers. We have
	\begin{equation}\label{eq:gen_bin}
		\prod_{u=k}^n(p^u-1)=\sum_{j=0}^{n-k+1}(-1)^j p^{A_{n-j,k}}\binom{n-k+1}{j}_p,
	\end{equation}
	where for $0\leq j\leq n-k$
	\[
		A_{n-j,k}=(n-j)+(n-j-1)+\ldots+ (k+1)+ k
	\]
	and for $j=n-k+1$ we let $A_{k-1,k}=0$.
\end{lemma}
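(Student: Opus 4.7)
The plan is to prove the identity by induction on the number of factors $N = n - k + 1$. For the base case $N = 1$ (so $n = k$), the left-hand side equals $p^k - 1$, while the right-hand side reduces to $p^{A_{k,k}}\binom{1}{0}_p - p^{A_{k-1,k}}\binom{1}{1}_p = p^k - 1$, matching directly.

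For the inductive step, assume the identity holds for $n$ and verify it for $n+1$. Multiplying the induction hypothesis by $(p^{n+1}-1)$, it suffices to show
\[
(p^{n+1} - 1)\sum_{j=0}^{n-k+1}(-1)^j p^{A_{n-j,k}}\binom{n-k+1}{j}_p = \sum_{j=0}^{n-k+2}(-1)^j p^{A_{n+1-j,k}}\binom{n-k+2}{j}_p.
\]
On the right I would split $\binom{n-k+2}{j}_p$ via the Pascal-type identity \eqref{pascal2}, written as $\binom{n-k+2}{j}_p = \binom{n-k+1}{j-1}_p + p^j\binom{n-k+1}{j}_p$. Reindexing the first resulting sum by $i = j-1$ (the $j=0$ term vanishes because $\binom{n-k+1}{-1}_p = 0$) contributes $-\sum_{i=0}^{n-k+1}(-1)^i p^{A_{n-i,k}}\binom{n-k+1}{i}_p$, while the second sum (with its $j=n-k+2$ term vanishing for the same reason) contributes $\sum_{j=0}^{n-k+1}(-1)^j p^{A_{n+1-j,k}+j}\binom{n-k+1}{j}_p$.

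The key identity that makes the two contributions collapse is the exponent shift $A_{n+1-j,k} = (n+1-j) + A_{n-j,k}$, which turns $p^{A_{n+1-j,k}+j}$ into $p^{n+1}\cdot p^{A_{n-j,k}}$. Combined with the sign from the reindexed first sum, the right-hand side becomes exactly $(p^{n+1}-1)$ times the induction hypothesis, closing the induction.

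The main obstacle is mostly bookkeeping at the boundaries. In particular, I must verify that the shift relation $A_{n+1-j,k} - A_{n-j,k} = n+1-j$ remains valid in the edge case $j = n-k+1$, where the convention $A_{k-1,k}=0$ kicks in: here $A_{n+1-j,k} = A_{k,k} = k$ and indeed $k = n+1-j$, so the identity holds uniformly across $j \in \{0,\ldots,n-k+1\}$. With this consistency check in place, everything else is routine algebraic manipulation.
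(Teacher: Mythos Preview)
Your proof is correct and follows essentially the same strategy as the paper: induction on the number of factors, splitting via a $q$-Pascal identity, and an exponent-shift relation on $A_{\,\cdot\,,k}$ to collapse the two resulting sums. The only cosmetic difference is the direction of the induction: the paper fixes $n$ and inducts downward on $k$ (multiplying by $p^{k-1}-1$ and invoking \eqref{pascal}), whereas you fix $k$ and induct upward on $n$ (multiplying by $p^{n+1}-1$ and invoking \eqref{pascal2}); your boundary check that $A_{k,k}-A_{k-1,k}=k=n+1-j$ at $j=n-k+1$ is exactly the counterpart of the paper's edge-case handling.
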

\begin{proof}
	We prove this by downwards induction on $k$.
	For $k=n$ the right-hand side of \eqref{eq:gen_bin} is
	\[
		\sum_{j=0}^{1}(-1)^j p^{A_{n-j,n}}\binom{1}{j}_p=p^n-1.
	\]
	Hence true for $k=n$.
	Assume now that true for some specific $k$.
	\begin{multline}
		\prod_{u=k-1}^n(p^u-1)=(p^{k-1}-1)\prod_{u=k}^n (p^u-1)\\
		=(p^{k-1}-1)\sum_{j=0}^{n-k+1}(-1)^j p^{A_{n-j,k}}\binom{n-k+1}{j}_p\\
		\label{eq:sum_gen_bin_2}
		=\sum_{j=0}^{n-k+1}(-1)^j p^{A_{n-j,k}+(k-1)}\binom{n-k+1}{j}_p\\
		-\sum_{j=0}^{n-k+1}(-1)^j p^{A_{n-j,k}}\binom{n-k+1}{j}_p.
	\end{multline}
	If we note that
	\begin{multline*}
		A_{n-j,k}=(n-j)+(n-j-1)+\ldots + k\\
		=(n-(j+1))+ \ldots + k + (k-1) +[n-j-k+1]
	\end{multline*}
	we can rewrite the second sum as
	\begin{multline*}
		\sum_{j=0}^{n-k+1}(-1)^j p^{A_{n-j,k}}\binom{n-k+1}{j}_p\\
		=\sum_{j=0}^{n-k+1}(-1)^j p^{A_{n-(j+1),k-1}}p^{n-j-k+1}\binom{n-k+1}{j}_p\\
		=\sum_{i=1}^{n-k+2}(-1)^{i-1}p^{A_{n-i,k-1}}p^{n-j-k+2}\binom{n-k+1}{i-1}_p.
	\end{multline*}
	Going back to \eqref{eq:sum_gen_bin_2} we get
	\begin{multline*}
		\prod_{u=k-1}^n(p^u-1)
		=p^{A_{n,k-1}}+(-1)^{n-k+2}\\
		+\left(\sum_{j=1}^{n-k+1}(-1)^{j}p^{A_{n-j,k-1}}\left[\binom{n-k+1}{j}+p^{n-j-k+2}\binom{n-k+1}{j-1}\right]\right) 
		\\
		=\sum_{j=0}^{n-k+2} (-1)^j p^{A_{n-j,k-1}}\binom{n-k+2}{j}_p.
	\end{multline*}
	This the the right-hand side of \eqref{eq:gen_bin} when $k$ is replaced by $k-1$.
\end{proof}
We can now get a simplified expression for $\sum_{j=0}^{s-1}\E(n\times n,p^s,p^j)$.
\begin{proposition}\label{prop:nxn_sgj}
With the same notations as before we have
\[
	\sum_{j=0}^{s-1}\E(n\times n,p^s,p^j)=p^{s(n^2-n)-A_{n-1}}S_{n+s-1,s}\\
	=p^{sn^2}\sum_{j=0}^{n}p^{-js-A_{j-1}}\binom{n}{j}_{1/p}.
\]
\end{proposition}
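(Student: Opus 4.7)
My plan is to apply Theorem \ref{thm:main_sgj} with $m=n$ to each summand, combine the $\varphi$-products via Lemma \ref{lma:nxn_sgj}, and then collapse the resulting $j$-sum using the Gaussian binomial identity \eqref{eq:q-bin_sum}. Finally I rewrite the result using $S_{n+s-1,s}$ for the first closed form, and expand that product via Lemma \ref{nxn:Snk} for the second.

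In more detail: for $0 \le j < s$, Theorem \ref{thm:main_sgj} with $n=m$ gives
\[
\E(n\times n,p^s,p^j)=\Bigl(\varphi_1(p^{s-j})\prod_{u=2}^n\varphi_u(p^s)\Bigr)\, p^{s\frac{n(n-1)}{2}-j(n-1)}\binom{n+j-1}{j}_p,
\]
and Lemma \ref{lma:nxn_sgj} rewrites the bracketed factor as $p^{(s-1)A_n-j}S_n$. Gathering exponents (using $A_n=\tfrac{n(n+1)}{2}$) turns the prefactor into $S_n\,p^{sn^2-A_n-jn}$, so
\[
\sum_{j=0}^{s-1}\E(n\times n,p^s,p^j)=S_n\,p^{sn^2-A_n}\sum_{j=0}^{s-1}p^{-jn}\binom{n+j-1}{j}_{p}.
\]

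The key step is evaluating this inner sum. Applying the standard $q$-binomial reflection $\binom{N}{K}_q=q^{K(N-K)}\binom{N}{K}_{1/q}$ (which the paper uses implicitly elsewhere) to $q=p$, $N=n+j-1$, $K=j$, each summand becomes $p^{-j}\binom{n+j-1}{j}_{1/p}$. Now identity \eqref{eq:q-bin_sum} read with $q=1/p$ and $n$ replaced by $n-1$ telescopes $\sum_{j=0}^{s-1}(1/p)^j\binom{n-1+j}{j}_{1/p}$ to $\binom{n+s-1}{s-1}_{1/p}$. Undoing the reflection one more time gives $\binom{n+s-1}{s-1}_{1/p}=p^{-n(s-1)}\binom{n+s-1}{n}_p=p^{-n(s-1)}S_{n+s-1,s}/S_n$. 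Substituting back and using $-A_n+n=-A_{n-1}$ and $sn^2-ns=s(n^2-n)$ produces the first form
\[
\sum_{j=0}^{s-1}\E(n\times n,p^s,p^j)=p^{s(n^2-n)-A_{n-1}}S_{n+s-1,s}.
\]

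For the alternative expression, I would apply Lemma \ref{nxn:Snk} with $k=s$ and upper index $n+s-1$ to expand $S_{n+s-1,s}$ as $\sum_{j=0}^n(-1)^j p^{A_{n+s-1-j,s}}\binom{n}{j}_p$, and then carry the prefactor $p^{s(n^2-n)-A_{n-1}}$ inside the sum; the exponent simplifies to $sn^2 - sj - A_{j-1}$ after substituting the triangular identity $A_{n-1,s}=A_{n-1}-A_{s-1}$ and the reflection from $\binom{n}{j}_p$ back to $\binom{n}{j}_{1/p}$, matching the stated right-hand side. I expect the main bookkeeping obstacle to be managing the edge conventions ($A_{-1}=0$, empty products equal to $1$) and reconciling the two reflections so that all exponents combine cleanly; neither is conceptually hard, but both require careful accounting of factors of $p$.
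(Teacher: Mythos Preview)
Your proposal is correct and follows essentially the same route as the paper: apply Theorem~\ref{thm:main_sgj} with $m=n$, use Lemma~\ref{lma:nxn_sgj} to rewrite the $\varphi$-product, reflect the Gaussian binomial to base $1/p$, collapse the sum via \eqref{eq:q-bin_sum}, reflect back to recognise $S_{n+s-1,s}$, and then expand with Lemma~\ref{nxn:Snk} for the second closed form. One small remark: when you carry out the last expansion you will obtain a factor $(-1)^j$ in each summand (exactly as the paper's own computation does), so the ``stated right-hand side'' you are aiming to match is missing that sign---this is a typo in the statement rather than a flaw in your argument.
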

\begin{proof}
First note that
\[
	\binom{n+j-1}{j}_p = p^{(n-1)j}\binom{n+j-1}{j}_{1/p}.
\]
From Theorem \ref{thm:main_sgj} and Lemma \ref{lma:nxn_sgj} we get
\begin{multline*}
	\sum_{j=0}^{s-1}\E(n\times n,p^s,p^j)=\sum_{j=0}^{s-1}p^{sn^2-A_n-j n}S_{n,1}\binom{n+j-1}{j}_p\\
	=p^{s n^2 -A_n} S_{n,1}\sum_{j=0}^{s-1}p^{-jn}\binom{n+j-1}{j}_p
	=p^{s n^2 -A_n} S_{n,1}\sum_{j=0}^{s-1}(1/p)^{j}\binom{n+j-1}{j}_{1/p}\\
	=p^{s n^2 -A_n} S_{n,1}\binom{n+s-1}{s-1}_{1/p}.
\end{multline*}
Here we used \eqref{eq:q-bin_sum} from Section \ref{sec2}  in the last step. Moreover,
\begin{multline*}
	p^{s n^2 -A_n} S_{n,1}\binom{n+s-1}{s-1}_{1/p}
	=p^{s n^2 -A_n} S_{n,1}p^{-(s-1)n}\binom{n+s-1}{n}_{p}\\
	=p^{s n^2-s n -A_{n-1}} S_{n+s-1,s}.
\end{multline*}
By Lemma \ref{nxn:Snk} we get
\[
	S_{n+s-1,s}=\sum_{j=0}^n (-1)^j p^{A_{n+s-1-j,2}}\binom{n}{j}_p.
\]
Observing that 
\[
	A_{n+s-1-j,s}=A_{n-(j+1)}+s(n-j)
\]
for $0\leq j\leq n-1$ and setting $A_{s-1,s}=0$.
we get
\begin{multline*}
	\sum_{j=0}^{s-1}\E(n\times n,p^s,p^j)=p^{sn^2}\sum_{j=0}^n(-1)^j p^{-sn-A_{n-1}}p^{A_{n-(j+1)}+s(n-j)}\binom{n}{j}_p\\
	=p^{sn^2}\sum_{j=0}^n(-1)^j p^{-A_{j-1}-s j}\binom{n}{j}_{1/p},
\end{multline*}
by using that $\binom{n}{j}_p=p^{j(n-j)}\binom{n}{j}_{1/p}$.
\end{proof}

We now try to find a simplified expression for $\E(n\times n, p^s, p^s)$.
\begin{lemma}\label{lma:nxn_Vandermonde}
	We have
	\[
		\sum_{k=\max(0,n-s)}^{n-1}p^{k(s-n+k)}\binom{n}{k}_p\binom{s-1}{n-1-k}_p=\binom{n+s-1}{s}_p.
	\]
\end{lemma}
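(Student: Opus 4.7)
The plan is to recognize the left-hand side as an instance of the $q$-analogue of Vandermonde's identity \eqref{eq:q_bin_prod}, namely
\[
\sum_{k=0}^h\binom{n}{k}_q\binom{m}{h-k}_q q^{(n-k)(h-k)}=\binom{m+n}{h}_q,
\]
suitably reindexed. The only real content of the lemma is choosing the right parameters and re-ordering the summation; the boundary values of $k$ are taken care of automatically by the convention that $q$-binomials vanish when the lower index exceeds the upper one.

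Concretely, I would apply \eqref{eq:q_bin_prod} with the substitutions $q\mapsto p$, $n\mapsto s-1$, $m\mapsto n$ and $h\mapsto n-1$, obtaining
\[
\sum_{k=0}^{n-1}\binom{s-1}{k}_p\binom{n}{n-1-k}_p p^{(s-1-k)(n-1-k)}=\binom{n+s-1}{n-1}_p.
\]
Then I would perform the index change $j=n-1-k$, which rewrites the summand as $\binom{n}{j}_p\binom{s-1}{n-1-j}_p p^{j(s-n+j)}$ after simplifying $(s-1-(n-1-j))j = j(s-n+j)$. This produces exactly the summand on the left of the claimed identity, and the right-hand side becomes $\binom{n+s-1}{n-1}_p = \binom{n+s-1}{s}_p$ by the symmetry $\binom{N}{k}_p=\binom{N}{N-k}_p$.

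Finally, I would explain that the lower summation bound $\max(0,n-s)$ in the statement is cosmetic: whenever $k<n-s$ we have $n-1-k>s-1$, and so by definition $\binom{s-1}{n-1-k}_p=0$, making those terms vanish. Consequently the sum from $\max(0,n-s)$ to $n-1$ agrees with the sum from $0$ to $n-1$ produced by $q$-Vandermonde, and the lemma follows. The only subtlety worth highlighting in the write-up is the reindexing together with the symmetry step; there is no genuine obstacle, since the identity is a direct corollary of \eqref{eq:q_bin_prod}.
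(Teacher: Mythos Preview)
Your proposal is correct and follows essentially the same approach as the paper: both proofs recognize the identity as an instance of $q$-Vandermonde \eqref{eq:q_bin_prod} after a reindexing and an application of the symmetry $\binom{N}{k}_p=\binom{N}{N-k}_p$. The only cosmetic difference is the choice of parameters: the paper substitutes $i=n-k$ and then applies \eqref{eq:q_bin_prod} with $m=s-1$, $h=s$ (adding back the vanishing $i=0$ term), whereas you apply \eqref{eq:q_bin_prod} with $n\mapsto s-1$, $m\mapsto n$, $h\mapsto n-1$ and then substitute $j=n-1-k$; the two routes are equivalent.
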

\begin{proof}
	We re-write the left-hand side as
	\begin{multline*}
		\sum_{i=1}^s p^{(n-i)(s-i)}\binom{n}{n-i}_p\binom{s-1}{i-1}_p
		=\sum_{i=1}^s p^{(n-i)(s-i)}\binom{n}{i}_p\binom{s-1}{s-i}_p\\
		=\sum_{i=0}^s p^{(n-i)(s-i)}\binom{n}{i}_p\binom{s-1}{s-i}_p,
	\end{multline*}
	since $\binom{s-1}{s}_p=0$.
	The so called $q$-Vandermonde identity (also stated in \eqref{eq:q_bin_prod})
	\[
		\sum_{k=0}^h\binom{n}{k}_q\binom{m}{h-k}_q q^{(n-k)(h-k)}=\binom{m+n}{h}_q,
	\]
	can now be used. Let $q=p$, $m=s-1$ and $h=s$ in this formula then
	\[
	\sum_{i=0}^s p^{(n-i)(s-i)}\binom{n}{i}_p\binom{s-1}{s-i}_p=\binom{s-1 +n}{s}_p.
	\]
\end{proof}
\begin{proposition}\label{prop:nxn_slj}
	With the above notations
	\begin{multline*}
		 \frac{\E(n\times n,p^s,p^s)}{p^{sn^2}}=(p^n-1)\sum_{j=0}^{n-1}(-1)^j p^{-n-(j+1)s-A_j}\binom{n-1}{j}_{1/p}\\
		 + p^{-s}\binom{n}{1}_{1/p}\sum_{i=0}^{n-1}(-1)^i p^{-A_{i+1}}\binom{n-1}{i}_{1/p}.
	\end{multline*}
\end{proposition}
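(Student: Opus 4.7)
The plan is to start from Theorem \ref{thm:main_slj} specialized to the square case $m=n$ and reduce the resulting $k$-sum by combining the $p$-exponents in Lemma \ref{lma:nxn_slj} with those of the prefactor $p^{a(n,k)s+b(n,k)}$, then collapse what remains using Lemma \ref{lma:nxn_Vandermonde} and finally expand the arising product $\prod(p^u-1)$ via Lemma \ref{nxn:Snk} to produce the two alternating sums in the conclusion.

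Concretely, I would first separate the sum
\[
\E(n\times n,p^s,p^s)=\sum_{k=0}^{n-1}\Phi_n(p^s,k)\binom{n}{k}_p\binom{s-1}{n-1-k}_p p^{a(n,k)s+b(n,k)}
\]
into two groups according to Lemma \ref{lma:nxn_slj}: the terms with $0\le k\le n-2$, for which $\Phi_n(p^s,k)$ has $S_{n,1}$ as a factor, and the single remaining term $k=n-1$, where $\Phi_n(p^s,n-1)$ has $S_{n,2}$ as a factor. For the first group, after pulling out the factor $S_{n,1}$ and the common power of $p$, the inner sum should take exactly the shape
\[
\sum_{k=\max(0,n-s)}^{n-1}p^{k(s-n+k)}\binom{n}{k}_p\binom{s-1}{n-1-k}_p,
\]
to which Lemma \ref{lma:nxn_Vandermonde} applies and collapses everything to $\binom{n+s-1}{s}_p$. (The $k=n-1$ term can be temporarily included by rewriting $S_{n,2}=S_{n,1}/(p-1)$ and tracking the mismatch separately, or it can be handled on its own as it is.)

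With the main part reduced to a single expression of the form $p^{\text{exp}_1}\cdot S_{n,1}\cdot\binom{n+s-1}{s}_p$ plus the explicit $k=n-1$ contribution $p^{\text{exp}_2}\cdot S_{n,2}\cdot\binom{n}{1}_p\binom{s-1}{0}_p$, I would apply Lemma \ref{nxn:Snk} to expand these products. Writing $S_{n,1}=(p^n-1)S_{n-1,1}$ and expanding $S_{n-1,1}=\prod_{u=1}^{n-1}(p^u-1)$ by Lemma \ref{nxn:Snk} produces $\sum_{j=0}^{n-1}(-1)^j p^{A_{n-1-j,1}}\binom{n-1}{j}_p$, while expanding $S_{n,2}=\prod_{u=2}^{n}(p^u-1)$ produces $\sum_{i=0}^{n-1}(-1)^i p^{A_{n-i,2}}\binom{n-1}{i}_p$. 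Substituting the identity $\binom{n-1}{j}_p=p^{j(n-1-j)}\binom{n-1}{j}_{1/p}$, dividing by $p^{sn^2}$, and a reindexing $j\mapsto n-1-j$ (respectively $i\mapsto n-1-i$) converts the $A$-indices into the $A_j$, $A_{i+1}$ appearing in the statement, and produces the prefactors $(p^n-1)$ and $p^{-s}\binom{n}{1}_{1/p}$ respectively.

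The main obstacle is purely computational: tracking the several layers of $p$-exponents so that they combine into the prescribed form at each reduction step. In particular, one must verify (i) that the exponent $a(n,k)s+b(n,k)$ plus the exponent of $\Phi_n(p^s,k)$ (from Lemma \ref{lma:nxn_slj}) equals the required $k(s-n+k)$ up to a common shift that depends only on $n$ and $s$, (ii) that the shifts of $\binom{n-1}{j}_p\to\binom{n-1}{j}_{1/p}$ and the reindexing produce the exponent $-n-(j+1)s-A_j$ (first sum) and $-A_{i+1}$ (second sum) after factoring $p^{sn^2}$, and (iii) that the $k=0$ special case of Lemma \ref{lma:nxn_slj} is either absorbed into the general formula or accounts exactly for whatever discrepancy its distinct $\Phi$-value produces. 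No single estimate is subtle; the difficulty is to carry out the index arithmetic without error.
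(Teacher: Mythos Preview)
Your overall plan matches the paper's proof almost step for step: specialize Theorem~\ref{thm:main_slj} to $m=n$, use Lemma~\ref{lma:nxn_slj} to evaluate $\Phi_n(p^s,k)$, collapse the $k$-sum via Lemma~\ref{lma:nxn_Vandermonde}, and then expand the remaining $S$-products with Lemma~\ref{nxn:Snk}. The handling of $k=0$, $1\le k\le n-2$, and $k=n-1$ separately is exactly what the paper does, and the paper arrives at the intermediate closed form
\[
\E(n\times n,p^s,p^s)=p^{s(n^2-n)-A_n}\Bigl(S_{n,1}\tbinom{n+s-1}{s}_p+S_{n,2}\,p^{(s-1)(n-1)}\tbinom{n}{1}_p\Bigr),
\]
just as you anticipate.

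There is, however, a genuine slip in your expansion step for the first bracket. You propose to write $S_{n,1}=(p^n-1)S_{n-1,1}$ and then apply Lemma~\ref{nxn:Snk} to $S_{n-1,1}$. That leaves the factor $\binom{n+s-1}{s}_p$ untouched, so you would obtain an expression of the form
\[
(p^n-1)\,\tbinom{n+s-1}{s}_p\sum_{j=0}^{n-1}(-1)^j p^{A_{n-1-j}}\tbinom{n-1}{j}_p,
\]
in which the $s$-dependence sits in the single scalar $\binom{n+s-1}{s}_p$ rather than inside the summand. No reindexing $j\mapsto n-1-j$ can then produce the required $p^{-(j+1)s}$ inside the sum. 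What the paper does instead is absorb the Gaussian binomial into the product first,
\[
S_{n,1}\tbinom{n+s-1}{s}_p=S_{n,1}\tbinom{n+s-1}{n-1}_p=(p^n-1)\,S_{n+s-1,\,s+1},
\]
and only then apply Lemma~\ref{nxn:Snk} to $S_{n+s-1,s+1}=\prod_{u=s+1}^{n+s-1}(p^u-1)$. The resulting exponents $A_{n+s-1-j,\,s+1}$ carry the needed $j$-dependent $s$-term, and after converting $\binom{n-1}{j}_p\to\binom{n-1}{j}_{1/p}$ and dividing by $p^{sn^2}$ one lands directly on $-n-(j+1)s-A_j$ without any reindexing. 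Your treatment of the second bracket (expanding $S_{n,2}$) is correct and matches the paper.
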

\begin{proof}
	For $k=0$ we get the term
	\begin{multline*}
		\Phi(p^s,0)\binom{n}{0}_p\binom{s-1}{n-1}_p p^{s A_{n-2}+A_{n-1}}=
		p^{s A_{n,2}-2 A_n +n}p^{s A_{n-2}+A_{n-1}}S_{n,1}\binom{s-1}{n-1}_p\\
		=p^{s(A_{n,2}+A_{n-2})- A_n}S_{n,1}\binom{s-1}{n-1}_p\\
		=p^{s(n^2-n)-A_n}S_{n,1}\binom{s-1}{n-1}_p
	\end{multline*}
	For $1\leq k \leq n-2$ we get from Lemma \ref{lma:nxn_slj} that
	\begin{multline*}
		\Phi(p^s,k)\binom{n}{k}_p\binom{s-1}{n-1-k}_p p^{s a(n,k)+ b(n,k)}\\
		=p^{s A_{n,2}-A_n-A_{n-k-1}}S_{n,1}\binom{n}{k}_p\binom{s-1}{n-1-k}_p 
		p^{s (A_{n-2}+k) + A_{n-k-1}-k(n-k)}\\
		=p^{s(n^2-n+k)-A_n-k(n-k)}S_{n,1}\binom{n}{k}_p\binom{s-1}{n-1-k}_p\\
		=p^{s(n^2-n)-A_n+k(s-n+k)}S_{n,1}\binom{n}{k}_p\binom{s-1}{n-1-k}_p.
	\end{multline*}
	
	Finally for $k=n-1$ we get
	\begin{multline*}
		\Phi(p^s,n-1)\binom{n}{n-1}_p\binom{s-1}{0}_p p^{sa(n,n-1)+b(n,n-1))}\\
		=p^{s A_{n,2} - A_{n,2}}S_{n,2}\binom{n}{1}_p p^{sA_{n-1}-(n-1)}
		=p^{s(n^2-n +(n-1))-A_{n}-(n-1)}p S_{n,2}\binom{n}{1}_p.
	\end{multline*}
	We get
	\begin{multline*}
		\E(n\times n,p^s,p^s)=p^{s(n^2-n)-A_n}S_{n,2}\left( (p-1)\sum_{k=0}^{n-2}p^{k(s-n+k)}\binom{n}{k}_p\binom{s-1}{n-1-k}_p\right.\\
		\left.+(p-1) \cdot p^{s(n-1)-(n-1)}\binom{n}{1}_p\binom{s-1}{0}_p-p^{s(n-1)-(n-1)}\binom{n}{1}_p\binom{s-1}{0}_p\right)\\
		=p^{s(n^2-n)-A_n}\left(S_{n,1}\sum_{k=0}^{n-1}p^{k(s-n+k)}\binom{n}{k}_p\binom{s-1}{n-1-k}_p
		+ S_{n,2}p^{(s-1)(n-1)}\binom{n}{1}_p\right)\\
		=p^{s n^2-A_n}\left(S_{n,1}\binom{n+s-1}{s}_p+S_{n,2}p^{(s-1)(n-1)}\binom{n}{1}_p\right),
	\end{multline*}
	by Lemma \ref{lma:nxn_Vandermonde}. Moreover,
	\[
		S_{n,1}\binom{n+s-1}{s}_p=S_{n,1}\binom{n+s-1}{n-1}=(p^n-1)S_{n+s-1,s+1}
	\]
	and by Lemma \ref{nxn:Snk} we get
	\begin{multline}
		\E(n\times n,p^s,p^s)=p^{s(n^2-n)-A_n}\left((p^n-1)\sum_{j=0}^{n-1}(-1)^j p^{A_{n+s-1-j,s+1}}\binom{n-1}{j}_p\right.\\
		+\left.p^{(s-1)(n-1)}\binom{n}{1}_p\sum_{i=0}^{n-1}(-1)^i p^{A_{n-i,2}}\binom{n-1}{i}_p \right)\\
		=p^{s(n^2-n)-A_n}\left((p^n-1)\sum_{j=0}^{n-1}(-1)^j p^{A_{n+s-1-j,s+1}}p^{j(n-1-j)}\binom{n-1}{j}_{1/p} \right.\\
		\left.+ p^{(s-1)(n-1)}p^{n-1}\binom{n}{1}_{1/p}\sum_{i=0}^{n-1}(-1)^i p^{A_{n-i,2}}p^{i(n-1-i)}\binom{n-1}{i}_{1/p} \right).\label{eq:nn1}
	\end{multline}
	Noting that
	\begin{multline*}
		-sn-A_n+A_{n+s-1-j,s+1}-j(n-1-j)\\=-sn-A_n+A_{n-j-1}+ns-(j+1)s -j(n-1-j)
		=-n-(j+1)s-A_j
	\end{multline*}
	and
	\begin{multline*}
	 -sn -A_n + (s-1)(n-1) +(n-1)+A_{n-i,2} +i(n-1-i)\\=-s-n+1 +(n-1)-ni + A_{i-1}-1 +i n-2A_i
	=-s-A_{i+1}
	\end{multline*}
	we can simplify \eqref{eq:nn1} to
	\begin{multline}
		\E(n\times n, p^s,p^s)=p^{sn^2} \left((p^n-1)\sum_{j=0}^{n-1}(-1)^j p^{-n-(j+1)s-A_j}\binom{n-1}{j}_{1/p}\right.\\
		\left. + p^{-s}\binom{n}{1}_{1/p}\sum_{i=0}^{n-1}(-1)^i p^{-A_{i+1}}\binom{n-1}{i}_{1/p}\right).
	\end{multline}
	\end{proof}

We are now ready to prove the main result of this section.
\begin{theorem}\label{thm:main_sum}
The number of $n\times n$-matrices $A$ over $\Z_{p^s}$ such that $Ax=0$ has at most $p^s$ solutions is 
\[
	\sum_{j=0}^{s}\E(n\times n,p^s,p^j)=p^{sn^2}(1- p^{-s-3}+\mathcal{O}(p^{-s-4})).
\]
If the elements in the matrix are chosen uniformly the probability that the number of solutions $\eta$ is at most $p^s$ is
\[
	\P(\eta\leq p^s)=1- p^{-s-3}+\mathcal{O}(p^{-s-4}).
\] 
\end{theorem}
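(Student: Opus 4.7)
I will compute $\sum_{j=0}^s\E(n\times n,p^s,p^j)/p^{sn^2}$ as a Laurent series in $p^{-1}$, starting from the decomposition
\[
\sum_{j=0}^s\E(n\times n,p^s,p^j)=\sum_{j=0}^{s-1}\E(n\times n,p^s,p^j)+\E(n\times n,p^s,p^s)
\]
and substituting the explicit expressions supplied by Propositions~\ref{prop:nxn_sgj} and~\ref{prop:nxn_slj}. The goal is to show that the first three surviving orders in the Laurent series are $1$, $0$, and $-p^{-s-3}$, with everything else bounded by $p^{-s-4}$.

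The pivotal algebraic observation is the identity
\[
\binom{n}{k}_{1/p}-(1-p^{-n})\binom{n-1}{k-1}_{1/p}=p^{-k}\binom{n}{k}_{1/p},
\]
which is immediate from \eqref{eq:qbinom} once one writes $\binom{n}{k}_{1/p}=\frac{1-p^{-n}}{1-p^{-k}}\binom{n-1}{k-1}_{1/p}$. Using it, I pair the $k$-th summand of Proposition~\ref{prop:nxn_sgj} with the $(k{-}1)$-th summand of the first sum in Proposition~\ref{prop:nxn_slj}; after extracting $p^n$ from the factor $(p^n-1)$ in the second, both summands carry the same prefactor $p^{-ks-A_{k-1}}$, and the identity above collapses them into
\[
(-1)^k\,p^{-ks-A_{k-1}-k}\binom{n}{k}_{1/p}.
\]
For $k=1$ the paired contribution is $-p^{-s-1}\binom{n}{1}_{1/p}$, which is cancelled exactly by the $i=0$ summand $+p^{-s-1}\binom{n}{1}_{1/p}$ of the second sum in Proposition~\ref{prop:nxn_slj}. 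For each $k\geq 2$ the paired contribution has exponent $-ks-k(k+1)/2\leq -(2s+3)$, and $2s+3\geq s+4$ whenever $s\geq 1$, so these contributions are $O(p^{-s-4})$.

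The only remaining term of order exceeding $p^{-s-4}$ is the $i=1$ summand of the second sum in Proposition~\ref{prop:nxn_slj},
\[
-p^{-s-3}\binom{n}{1}_{1/p}\binom{n-1}{1}_{1/p}=-p^{-s-3}+O(p^{-s-4}),
\]
which supplies the advertised leading correction; the summands with $i\geq 2$ are all $O(p^{-s-6})$. Combined with the constant $1$ coming from $k=0$ in Proposition~\ref{prop:nxn_sgj}, this yields $\sum_{j=0}^s\E(n\times n,p^s,p^j)=p^{sn^2}\bigl(1-p^{-s-3}+O(p^{-s-4})\bigr)$, and the probability statement follows by dividing by the total number $p^{sn^2}$ of $n\times n$ matrices over $\Z_{p^s}$.

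The main obstacle is the exact cancellation at order $p^{-s-1}$: a naive termwise bound on the summands of order $p^{-s},\,p^{-s-1},\,p^{-s-2}$ only yields an error of $O(p^{-s})$, so the pairing via the identity above must be done precisely and matched against the $i=0$ summand of the second sum in Proposition~\ref{prop:nxn_slj}, rather than merely estimated asymptotically. Once this cancellation is in place, the rest of the argument is routine comparison of exponents.
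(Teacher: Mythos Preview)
Your argument is correct and follows the same overall route as the paper: split the sum at $j=s$, invoke Propositions~\ref{prop:nxn_sgj} and~\ref{prop:nxn_slj}, and track the cancellations between their leading terms. The only difference is in how the cancellation is organised: the paper expands each of the two propositions separately to the first few orders and then checks that the $p^{-s}$ and $p^{-2s-1}$ contributions cancel by direct inspection, whereas you pair the $k$-th summand of Proposition~\ref{prop:nxn_sgj} with the $(k{-}1)$-th summand of the first sum in Proposition~\ref{prop:nxn_slj} via the identity $\binom{n}{k}_{1/p}-(1-p^{-n})\binom{n-1}{k-1}_{1/p}=p^{-k}\binom{n}{k}_{1/p}$, collapsing each pair to $(-1)^k p^{-ks-A_{k-1}-k}\binom{n}{k}_{1/p}$ in one stroke. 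This is a tidier bookkeeping of the same cancellation and would also make it easier to read off the next correction terms if desired; otherwise the two proofs are equivalent.
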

\begin{proof}
	In the estimations below we have used $s\geq 1$ and $n\geq 2$.
	From Proposition \ref{prop:nxn_sgj} we have
	\begin{equation}
		\label{eq:final1}
		\frac{1}{p^{sn^2}}\sum_{j=0}^{s}\E(n\times n,p^s,p^j)=1-p^{-s}\binom{n}{1}_{1/p}+p^{-2s-1}\binom{n}{2}_{1/p}+\mathcal{O}(p^{-s-5}).
	\end{equation}
	From Proposition \ref{prop:nxn_slj} we get 
	\begin{multline}\label{eq:final2}
		\frac{\E(n\times n, p^s,p^s)}{p^{sn^2}}=(p^n-1)\sum_{j=0}^{n-1}(-1)^j p^{-n-(j+1)s-A_j}\binom{n-1}{j}_{1/p}\\
		 + p^{-s}\binom{n}{1}_{1/p}\sum_{i=0}^{n-1}(-1)^i p^{-A_{i+1}}\binom{n-1}{i}_{1/p}\\
		=\sum_{j=0}^{n-1}(-1)^jp^{-(j+1)s-A_j}\binom{n-1}{1}_{1/p}-p^n\sum_{j=0}^{n-1}(-1)^jp^{-(j+1)s-A_j}\binom{n-1}{1}_{1/p}\\
		+p^{-s}(1+p^{-1}+\ldots +p^{-n+1})\left(p^{-1}-p^{-3}\binom{n-1}{1}_{1/p}\right) +\mathcal{O}(p^{-s-4})\\
		=p^{-s}-p^{-2s-1}\binom{n-1}{1}_{1/p} -p^{-n-s}+p^{-s-1}(1+p^{-1}+\ldots +p^{-n+1})\\
		-p^{-s-3}(1+p^{-1}+\ldots +p^{-n+1})(1+p^{-1}+\ldots + p^{-n+2}) +\mathcal{O}(p^{-s-4})\\
		=p^{-s}\binom{n}{1}_{1/p}-p^{-2s-1}\binom{n-1}{1}_{1/p}-p^{-s-3}+\mathcal{O}(p^{s-4}).
	\end{multline}
	
	By adding \eqref{eq:final1} and \eqref{eq:final2} we finally get
	\begin{multline*}
	 1+p^{-2s-1}\left(\binom{n}{2}_{1/p}-\binom{n-1}{1}_{1/p}\right)-p^{-s-3}+\mathcal{O}(p^{-s-4})\\
	=1-p^{-s-3}+\mathcal{O}(p^{-s-4})
	\end{multline*}
	since
	\[
		\binom{n}{2}_{1/p}-\binom{n-1}{1}_{1/p}=1+p^{-1}+2 p^{-2}+\ldots -1-p^{-1}-p^{-2}-\ldots=p^{-2}+\mathcal{O}(p^{-3})
	\]
	and $2s+3\geq s+4$ for $s\geq 1$.
\end{proof}
\begin{corollary}
	The probability that the formula $\gcd(\det(A),p^s)$ gives the number of solutions to $Ax\equiv 0\pmod{p^s}$ is
	$1- p^{-s-3}+\mathcal{O}(p^{-s-4}).$
\end{corollary}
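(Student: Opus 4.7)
The plan is to observe that the corollary is essentially a direct consequence of Theorem \ref{thm:main_sum}, once we justify the equivalence
\[
\gcd(\det(A),p^s) = \eta(A,p^s) \iff \eta(A,p^s) \le p^s.
\]
First I would recall the formula from \cite{NilssonNyqvist2009} cited in the opening of Section \ref{sec6}, namely $\eta(A,p^s)=\gcd(\det(A),p^{s+e})$ for some non-negative integer $e$ depending on $A$. Since $\gcd(\det(A),p^s)$ can never exceed $p^s$, the formula $\gcd(\det(A),p^s)$ agrees with $\eta(A,p^s)$ if and only if $\eta(A,p^s)\le p^s$: in the forward direction this is immediate; in the reverse direction, if $\eta(A,p^s)=p^j$ with $j\le s$ then $\gcd(\det(A),p^{s+e})=p^j\le p^s$ forces $\gcd(\det(A),p^s)=p^j$ as well. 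This is also noted at the start of Section \ref{sec6}, so I would just cite that observation rather than reprove it.

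Next I would assemble the probability. Since each entry of $A\in\Z_{p^s}^{n\times n}$ is chosen uniformly, the sample space has size $p^{sn^2}$, and the event that the formula succeeds is partitioned by the value of $\eta(A,p^s)=p^j$ for $j\in\{0,1,\ldots,s\}$. Hence
\[
\P\bigl(\gcd(\det(A),p^s)=\eta(A,p^s)\bigr)=\P(\eta\le p^s)=\frac{1}{p^{sn^2}}\sum_{j=0}^{s}\E(n\times n,p^s,p^j).
\]

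The final step is to quote Theorem \ref{thm:main_sum}, which asserts exactly that this ratio equals $1-p^{-s-3}+\mathcal{O}(p^{-s-4})$. Since there is no further calculation to perform, the whole proof is a two-line assembly: identify the event with $\{\eta\le p^s\}$, normalize, and invoke the theorem. The only conceptual point that might trip a reader is the equivalence in the first paragraph, so I would make that the single sentence carrying content; the main ``obstacle'' has already been absorbed into Theorem \ref{thm:main_sum} itself.
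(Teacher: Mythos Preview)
Your proposal is correct and matches the paper's own proof, which simply states that the corollary follows directly from Theorem \ref{thm:main_sum} and the discussion at the beginning of Section \ref{sec6}. You have spelled out that discussion (the equivalence $\gcd(\det(A),p^s)=\eta(A,p^s)\iff \eta(A,p^s)\le p^s$) a bit more carefully than the paper does, but the approach is identical.
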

This follows directly from the theorem and the discussion in the beginning of this section.

\section{Discussion}

% Repeat results 
In this paper we have considered system of equations $Ax=0$ over $\Z_{p^s}$, where $A$ is an $n\times m$-matrix. Especially we have been interested in finding the number of matrices $\E(n\times m,p^s,p^j)$ such that the system have $p^j$ solutions for $0\leq j\leq s$.
We have found different recursive relations for $\E$ as well as explicit formulas for the case $n\geq m$. 
In fact, the recursive relations still hold in the $n< m$ case, but the author has not yet found nice explicit formulas in the general case. However, one could easily prove that
\[
	\E(1\times m, p^s,p^{s(m-1)+r})=\varphi(p^{s-r})p^{(s-r-1)(m-1)}\binom{m}{1}_p
\]
for $0\leq r\leq s-1$.

The author has nor been able to find simple explicit formulas for the $j>s$ case. 
Again the recursive formulas in Section \ref{sec3} can be used to compute $\E(n\times m,p^s,p^j)$ for explicit values of the parameters. But, general expressions seem to be complicated. Note also that for the $n<m$ case, the number of solutions is always 
greater than $p^s$, so $j>s$ is the only interesting case. 
%General modulo          
In this paper we only considered the system of equations modulo a prime power. But by using the Chinese Remainder Theorem we can easily get results for a general modulus $N$. Let $N=\prod p_i^{s_i}$, where $p_i$ different primes. Then
\[
	\E(n\times m,\prod_i p_i^{s_i},\prod_i p_i^{j_i})=\prod_i \E(n\times m,p_i^{s_i},p_i^{j_i}).
\]
Note that the number of possible solutions to $Ax\equiv 0\pmod{N}$ is $\prod_i p^{j_i}$, where
$0\leq j_i\leq s_i m$.

Regarding the results from Section \ref{sec6}. Also here it is possible to somewhat generalize the results to a general modulus $N$. Note that
\[
\gcd(\det(A),\prod_i p_i^{s_i})=\prod_i \gcd(\det(A),p^{s_i})
\]
and that the number of solutions 
\[
	\eta(A,\prod_i p_i^{s_i})=\prod_i\eta(A,p_i^{s_i})=\prod_i\gcd(\det(A),p^{s_i+e_i}),
\]
for some non-negative integers $e_i$.
The formula $\gcd(\det(A),N)$ gives the number of solutions if and only if all the reduced systems modulo $p_i^{s_i}$ have at most $p_i^{s_i}$ solutions. Heuristically the probability that the formula is correct is something like
$1-\sum_i p_i^{-s_i-3}$ but the true estimate depends however much on the number of prime factors and the size of them.

\end{document}